\providecommand{\U}[1]{\protect\rule{.1in}{.1in}}
\newtheorem{definition}{Definition}[section]
\theoremstyle{remark}
\numberwithin{equation}{section}
\newtheorem{theorem}{Theorem}[section]
\newtheorem{lemma}{Lemma}[section]
\newtheorem{proposition}{Proposition}[section]
\newtheorem{corollary}{Corollary}[section]
\theoremstyle{plain}
\begin{document}
\title{On some properties of PBZ*-lattices}
\author{Roberto Giuntini, Antonio Ledda, Francesco Paoli}
\address{University of Cagliari}

\begin{abstract}
We continue the algebraic investigation of \emph{PBZ*-lattices}, a notion
introduced in \cite{GLP1+} in order to obtain insights into the structure of
certain algebras of effects of a Hilbert space, lattice-ordered under the
spectral ordering.

\emph{Keywords: }Orthomodular lattice; PBZ*-lattice; Brouwer-Zadeh lattice;
Kleene lattice; Unsharp quantum theory; Effect; Spectral ordering.

\end{abstract}
\maketitle

\section{Introduction}

In \cite{GLP1+} we introduced the variety {$\mathbb{PBZL}^{\ast}$ of
}\emph{PBZ*-lattices} as an abstract algebraic counterpart of the structure%
\[
\left\langle \mathcal{E}\left(  \mathbf{H}\right)  ,\wedge,\vee,^{\prime
},^{\sim},\mathbb{O},\mathbb{I}\right\rangle \text{,}%
\]
where:

\begin{itemize}
\item $\mathcal{E}\left(  \mathbf{H}\right)  $ is the set of all effects of a
given complex separable Hilbert space;

\item $\wedge$ and $\vee$ are the meet and the join, respectively, of the
\emph{spectral ordering} $\leq_{s}$ so defined\footnote{That the spectral
ordering is indeed a lattice ordering has been essentially shown by Olson
\cite{Ols} and de Groote \cite{deG}, who also proved that it coincides with
the more familiar ordering of effects induced via the trace functional when
both orderings are restricted to the set of projection operators of the same
Hilbert space. The same ordering has also been given an algebraic treatment,
in a different context, in \cite{Dvur}.} for all $E,F\in\mathcal{E}\left(
\mathbf{H}\right)  $:
\[
E\leq_{s}F\,\,\,\text{iff}\,\,\,\forall\lambda\in\mathbb{R}:\,\,M^{F}%
(\lambda)\leq M^{E}(\lambda),
\]
where for any effect $E$, $M^{E}$ is the unique spectral family \cite[Ch.
7]{Kr} such that $E=\int_{-\infty}^{\infty}\lambda\,dM^{E}(\lambda)$ (the
integral is here meant in the sense of norm-converging Riemann-Stieltjes sums
\cite[Ch. 1]{Strocco});

\item $\mathbb{O}$ and $\mathbb{I}$ are the null and identity operators, respectively;

\item $E^{\prime}=\mathbb{I}-E$ and $E^{\sim}=P_{\ker\left(  E\right)  }$ (the
projection onto the kernel of $E$).
\end{itemize}

This class of algebras is further motivated, from a physical viewpoint, by the
fact that it reproduces at an abstract level the \textquotedblleft
collapse\textquotedblright\ of several notions of \emph{sharp physical
property} that is observed in the concrete physical model over $\mathcal{E}%
\left(  \mathbf{H}\right)  $; from an algebraic viewpoint, moreover, it can be
viewed as an unsharp generalisation of orthomodular lattices that also covers
certain expansions of Kleene lattices --- and, consequently, may have some
potential interest for many-valued logics of partial information
\cite{Bergman}.

In \cite{GLP1+}, we started an algebraic investigation of {$\mathbb{PBZL}%
^{\ast}$, which we continue in the present paper. In Section \ref{prel}, the
main definitions and results of }\cite{GLP1+} are summarised, with an eye to
making this work reasonably self-contained. In Section {\ref{porto}, we
improve on the description of the lattice of subvarieties of $\mathbb{PBZL}%
^{\ast}$ given in }the same paper. The main result of this section is a
representation theorem for subdirectly irreducible antiortholattices (see
below for a definition) in terms of twist structures over bounded lattices,
which is applied later on to identify a suitable set of generators for the
distributive subvariety of the variety $V\left(  \mathbb{AO}{\mathbb{L}%
}\right)  $ generated by antiortholattices. We also show that the distributive
subvariety of $V\left(  \mathbb{AO}{\mathbb{L}}\right)  $ satisfying the De
Morgan law for the intuitionistic complement $^{\sim}$ is generated by the
$5$-element antiortholattice chain, and we simplify an already known
equational basis for the subvariety of $V\left(  \mathbb{AO}{\mathbb{L}%
}\right)  $ generated by the $3$-element antiortholattice chain. In Section
{\ref{orizzo} we investigate horizontal sums of PBZ*-lattices, showing that
the variety generated by }subdirectly irreducible {PBZ$^{\ast}$-lattices}
which are a horizontal sum of Boolean algebras and an ortholattice chain is
generated by its finite members\footnote{We acknowledge here G. Cattaneo's
insightful comments on \cite{GLP1+}, which we received after the paper itself
had gone into its production stage.}.

\section{ Preliminaries\label{prel}}

We recap in this section some definition and results (the latter mostly from
\cite{GLP1+}, except when explicitly noted) which will be needed in the sequel.

\begin{definition}
\label{de:kleene}A bounded involution lattice $\mathbf{L}=\left\langle
L,\wedge,\vee,^{\prime},0,1\right\rangle $ is a \emph{pseudo-Kleene algebra}
in case it satisfies any of the following two equivalent conditions:

\begin{enumerate}
\item for all $a,b\in L$, $\,$if$\,\,a\leq a^{\prime}\,\,$and$\,\,b\leq
b^{\prime},\,\,$then$\,a\leq b^{\prime}\,$;

\item for all $a,b\in L$, $a\wedge a^{\prime}\leq b\vee b^{\prime}$.
\end{enumerate}
\end{definition}

The variety of pseudo-Kleene algebras, for which see e.g. \cite{PSK}, is
denoted by $\mathbb{PKA}$. Distributive pseudo-Kleene algebras are variously
called \emph{Kleene lattices} or \emph{Kleene algebras} in the literature.
Observe that in \cite{GLP1+}, embracing the terminological usage from \cite[p.
12]{RQT}, pseudo-Kleene algebras were referred to as \textquotedblleft Kleene
lattices\textquotedblright. Here, we switch to the less ambiguous
\textquotedblleft pseudo-Kleene algebras\textquotedblright, following the
suggestion of a referee.

In unsharp quantum logic, there are several competing purely algebraic
characterisations of sharp effects \cite[Ch. 7]{RQT}. A quantum effect or
property is usually called \emph{sharp} if it satisfies the noncontradiction principle:

\begin{definition}
\label{de:kleenesharp}Let $\mathbf{L}\in\mathbb{PKA}$.

\begin{enumerate}
\item An element $a\in L$ is said to be \emph{Kleene-sharp} iff $a\wedge
a^{\prime}=0$. $S_{K}(L)\,\,$denotes the class of Kleene-sharp elements
of$\,\,\mathbf{L}$.

\item An \emph{ortholattice\/} is a bounded involution lattice $\mathbf{L}$
such that $S_{K}(L)=L$. The variety of ortholattices is denoted by
$\mathbb{OL}$.
\end{enumerate}
\end{definition}

Among ortholattices, \emph{orthomodular lattices }play a crucial role in the
standard (sharp) approach to quantum logic. Recall that an orthomodular
lattice is an ortholattice $\mathbf{L}$ such that, for all $a,b\in L$,
$\,$if$\,\,\,a\leq b,\,\,\,$then$\,\,b=(b\wedge a^{\prime})\vee a$. The class
of orthomodular lattices is actually a variety, hereafter denoted by
$\mathbb{OML}$.

It is well-known that an ortholattice $\mathbf{L}$ is orthomodular if and only
if, for all $a,b\in L$, $\,$if$\,\,a\leq b\,\,\,$and$\,\,a^{\prime}\wedge
b=0,\,\,$then$\,\,a=b$. The right-to-left direction of this equivalence fails
in the wider setting of bounded involution lattices. This justifies the
following definition, aimed at recapturing at least in part the force of the
orthomodular condition in bounded involution lattices that may contain
Kleene-unsharp elements.

\begin{definition}
\label{de:paraorthomodular}An algebra $\mathbf{L}$ with a bounded involution
lattice term reduct is said to be \emph{paraorthomodular} iff, for all $a,b\in
L$:
\[
\,\text{if}\,\,a\leq b\,\,\,\text{and}\,\,a^{\prime}\wedge b=0,\,\,\text{then}%
\,\,a=b.
\]

\end{definition}

It turns out that the class of paraorthomodular pseudo-Kleene algebras is a
proper quasivariety, whence we cannot help ourselves to the strong universal
algebraic properties that characterise varieties. It is then natural to wonder
whether there exists an \emph{expansion} of the language of bounded involution
lattices where the paraorthomodular condition can be equationally recovered.
The appropriate language expansion is provided by including an additional
unary operation and moving to the $\left\langle 2,2,1,1,0,0\right\rangle $
type, already familiar to unsharp quantum logicians and algebraists from the
investigation of \emph{Brouwer-Zadeh lattices} (see \cite{CN} or \cite[Ch.
4.2]{RQT}).

\begin{definition}
\label{de:bazar}\noindent A \emph{Brouwer Zadeh lattice\/} (or
\emph{BZ-lattice\/}) is an algebra%
\[
\mathbf{B}=\langle{B\,,\,\wedge,\vee\,,\,}^{\prime}{,\,^{\sim},0\,,1}\rangle
\]
of type $\left\langle 2,2,1,1,0,0\right\rangle $, such that:

\begin{enumerate}
\item $\langle{B\,,\,\wedge,\vee\,,\,}^{\prime}{,0\,,1}\rangle$ is a
pseudo-Kleene algebra;

\item for all $a,b\in B$, the following conditions are satisfied:
\[%
\begin{array}
[c]{ll}%
\text{(i) }a\wedge a^{\sim}={0}\text{;} & \text{(ii) }a\leq a^{\sim\sim}\\
\text{(iii) }a\leq b\ \,\text{implies}\,\ b^{\sim}\leq a^{\sim}\text{;} &
\text{(iv) }a{^{\sim\prime}=}a^{\sim\sim}\text{.}%
\end{array}
\]

\end{enumerate}
\end{definition}

The class of all BZ-lattices is a variety, denoted by $\mathbb{BZL}$;
$\mathbb{OL}$ can be identified with the subvariety of $\mathbb{BZL}$ whose
relative equational basis w.r.t. $\mathbb{BZL}$ is given by the equation
$x{\,^{\sim}=x}^{\prime}$. In any BZ-lattice, we set $\Diamond x=x^{\sim\sim}$
and $\Box x=x^{\prime\sim}$. The following arithmetical lemma, the proof of
which is variously scattered in the above-mentioned literature and elsewhere,
will be used with no special mention in what follows.

\begin{lemma}
\label{basics}Let $\mathbf{L}$ be a BZ-lattice. For all $a,b\in L$, the
following conditions hold:

\begin{description}
\item[(i)] $a^{\sim\sim\sim}=a^{\sim}$;

\item[(ii)] $a^{\sim}\leq a^{\prime}$;

\item[(iii)] $(a\vee b)^{\sim}=a^{\sim}\wedge b^{\sim}$;

\item[(iv)] $a^{\sim}\vee b^{\sim}\leq(a\wedge b)^{\sim}$;

\item[(v)] $(\Box(a^{\prime}))^{\prime}=\Diamond a$;

\item[(vi)] $\Box(a\wedge b)=\Box a\wedge\Box b$;

\item[(vii)] $\Diamond(a\vee b)=\Diamond a\vee\Diamond b$;

\item[(viii)] $\Diamond(a\wedge b)\leq\Diamond a\wedge\Diamond b$;

\item[(ix)] if $a^{\prime}\leq a$, then $a^{\sim}=0$.
\end{description}
\end{lemma}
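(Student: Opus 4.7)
The proof of this lemma is essentially a sequence of short manipulations, each reducing to the four BZ-lattice axioms (i)--(iv) and standard bounded involution lattice arithmetic (De Morgan for $'$, order reversal under $'$, monotonicity of $\wedge,\vee$). My plan is to take the nine parts roughly in the order given, since later parts reuse earlier ones.

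First I would handle (i) and (ii), which are the workhorses. For (i), applying axiom (ii) to $a^\sim$ gives $a^\sim\le a^{\sim\sim\sim}$, while applying axiom (iii) to $a\le a^{\sim\sim}$ (which is axiom (ii)) gives $a^{\sim\sim\sim}\le a^\sim$. For (ii), I would rewrite $a^{\sim\sim}=a^{\sim\prime}$ by axiom (iv); then from $a\le a^{\sim\sim}=a^{\sim\prime}$, order reversal under $'$ in the involution lattice yields $a^\sim\le a^\prime$.

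The main obstacle is the De Morgan-type identity (iii), since the axioms only give monotonicity of $^\sim$ (reversing order), not an outright equality; (vii) will then depend on (iii). For (iii), the inclusion $(a\vee b)^\sim\le a^\sim\wedge b^\sim$ is immediate from monotonicity. For the reverse, I would let $c=a^\sim\wedge b^\sim$ and observe that $c\le a^\sim$ forces $a\le a^{\sim\sim}\le c^\sim$ (by axioms (ii) and (iii)), and symmetrically $b\le c^\sim$; hence $a\vee b\le c^\sim$, so axiom (iii) gives $c^{\sim\sim}\le(a\vee b)^\sim$, and axiom (ii) then yields $c\le c^{\sim\sim}\le(a\vee b)^\sim$. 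Part (iv) is again trivial from monotonicity of $^\sim$.

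The remaining items fall out quickly. For (v), I compute $\Box(a')=a^{\prime\prime\sim}=a^\sim$, so $(\Box(a'))'=a^{\sim\prime}=a^{\sim\sim}=\Diamond a$ by axiom (iv). For (vi), combine De Morgan for $'$ with (iii): $\Box(a\wedge b)=(a\wedge b)^{\prime\sim}=(a'\vee b')^\sim=a^{\prime\sim}\wedge b^{\prime\sim}=\Box a\wedge\Box b$. For (vii), set $c=a^\sim\wedge b^\sim$; by axiom (iv) both $a^{\sim\sim}=a^{\sim\prime}$ and $b^{\sim\sim}=b^{\sim\prime}$, so $a^{\sim\sim}\vee b^{\sim\sim}=(a^\sim\wedge b^\sim)'=c'$. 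On the other hand, the argument used in (iii) gives $a^{\sim\sim}\vee b^{\sim\sim}\le c^\sim$, while (ii) of this lemma gives $c^\sim\le c'$; hence $c^\sim=c'=a^{\sim\sim}\vee b^{\sim\sim}$, and by (iii) this equals $(a\vee b)^{\sim\sim}=\Diamond(a\vee b)$. Part (viii) is immediate by applying axiom (iii) twice to $a\wedge b\le a$ and to $a\wedge b\le b$. Finally, for (ix), from $a'\le a$ and (ii) we have $a^\sim\le a'\le a$, so $a^\sim=a\wedge a^\sim=0$ by axiom (i). No step is long; the only subtlety is the little trick in (iii) and (vii) of passing through $c^{\sim\sim}$ to turn monotonicity into an equality.
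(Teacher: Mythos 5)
The paper gives no proof of this lemma at all --- it explicitly defers to the literature, remarking that the proof ``is variously scattered in the above-mentioned literature and elsewhere'' --- so there is nothing internal to compare your argument against. Your proof is correct and complete: each of the nine items follows as you say from the four Brouwer-complement axioms together with the involution and De Morgan properties of $^{\prime}$, and the two genuinely non-immediate points are handled properly, namely the passage through $c = a^{\sim}\wedge b^{\sim}$ and $c \leq c^{\sim\sim}$ to upgrade antitonicity to the equality in (iii), and the squeeze $a^{\sim\sim}\vee b^{\sim\sim} = c^{\prime} \leq c^{\sim} \leq c^{\prime}$ (using axiom (iv) and item (ii)) in (vii). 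One small remark: in (vii) you could shortcut the squeeze by noting that $c=(a\vee b)^{\sim}$ lies in the range of $^{\sim}$, so axiom (iv) gives $c^{\sim}=c^{\sim\sim\sim\prime\prime}=c^{\prime}$ directly; but your version is equally valid.
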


We remarked above that Kleene-sharpness is not the unique purely algebraic
characterisation of a sharp quantum property. Two noteworthy alternatives now
become available in our expanded language of BZ-lattices.

\begin{definition}
\label{de:modalbrouwer} Let $\mathbf{L}$ be a BZ-lattice.

\begin{enumerate}
\item An element $a\in L$ is said to be \emph{$\Diamond$-sharp} iff
$a=\Diamond a$ (equivalently, iff $a^{\prime}=a^{\sim}$); the class of all
$\Diamond$-sharp elements of $\mathbf{L}$ will be denoted by $S_{\Diamond}(L)$.

\item An element $a\in L$ is said to be \emph{Brouwer-sharp} iff $a\vee
a^{\sim}=1$; the class of all Brouwer-sharp elements of $\mathbf{L}$ will be
denoted by $S_{B}(L)$.
\end{enumerate}
\end{definition}

For a generic BZ-lattice $\mathbf{L}$, we have that $S_{\Diamond}(L)\subset
S_{B}(L)\subset S_{K}(L)$. However, in any BZ-lattice of effects of a Hilbert
space (under the meet and join operation induced by the spectral ordering)
these three classes coincide. Consequently, it makes sense to investigate
whether there is a class of BZ-lattices for which this collapse result can be
recovered at a purely abstract level. The next definition and theorem answer
this question in the affirmative.

\begin{definition}
\label{de:bzlstar}A \emph{BZ$^{\ast}$-lattice} is a BZ-lattice $\mathbf{L}$
that satisfies the following condition for all $a\in L:$%
\[
\left(  \ast\right)  \noindent\qquad\left(  a\wedge a^{\prime}\right)  ^{\sim
}\leq a^{\sim}\vee a^{\prime\sim}.
\]

\end{definition}

\begin{theorem}
\label{co:collassone} Let $\mathbf{L}$ be a paraorthomodular BZ$^{\ast}%
$-lattice. Then,
\[
S_{\Diamond}(L)=S_{B}(L)=S_{K}(L).
\]

\end{theorem}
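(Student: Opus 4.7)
The plan is to establish the two nontrivial inclusions $S_K(L)\subseteq S_B(L)$ and $S_B(L)\subseteq S_\Diamond(L)$, since the reverse chain $S_\Diamond(L)\subseteq S_B(L)\subseteq S_K(L)$ has already been noted to hold in any BZ-lattice. The first inclusion will be a direct consequence of axiom $(\ast)$, while paraorthomodularity will enter only in the second.

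For $S_K(L)\subseteq S_B(L)$, assume $a\wedge a'=0$. Applying $(\ast)$ to $a$ yields $(a\wedge a')^\sim \leq a^\sim \vee a^{\prime\sim}$. Since $0^\sim=1$ in any BZ-lattice (from $1\wedge 1^\sim = 0$ one gets $1^\sim=0$, and then Definition \ref{de:bazar}(iv) applied at $1$ gives $1 = 0' = 1^{\sim\prime} = 1^{\sim\sim} = 0^\sim$), we conclude $a^\sim\vee a^{\prime\sim}=1$. Now Lemma \ref{basics}(ii) applied to $a'$ yields $a^{\prime\sim}\leq a^{\prime\prime}=a$, so $1 = a^\sim\vee a^{\prime\sim}\leq a^\sim\vee a$, i.e., $a\in S_B(L)$.

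For $S_B(L)\subseteq S_\Diamond(L)$, assume $a\vee a^\sim=1$. Taking Kleene complements and using Definition \ref{de:bazar}(iv) we obtain $0 = (a\vee a^\sim)' = a'\wedge a^{\sim\prime}= a'\wedge a^{\sim\sim}$. Since $a^\sim\leq a'$ by Lemma \ref{basics}(ii), set $x=a^\sim$ and $y=a'$: then $x\leq y$ and $x'\wedge y = a^{\sim\prime}\wedge a' = a^{\sim\sim}\wedge a'=0$, so paraorthomodularity forces $x=y$, i.e., $a^\sim=a'$. Equivalently $a = a^{\prime\prime}= a^{\sim\prime}= a^{\sim\sim}=\Diamond a$, whence $a\in S_\Diamond(L)$.

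The whole argument reduces to two short computations. The only thing to notice is that axiom $(\ast)$ produces $a^\sim\vee a^{\prime\sim}=1$ rather than $a^\sim\vee a=1$ directly, so one has to bridge the gap using the inequality $a^{\prime\sim}\leq a$; once this observation is in place no real obstacle remains, and paraorthomodularity settles the second inclusion in one application to the pair $(a^\sim, a')$.
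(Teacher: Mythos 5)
Your proof is correct: both inclusions check out --- $(\ast)$ together with $0^{\sim}=1$ and $a^{\prime\sim}\leq a$ gives $S_{K}(L)\subseteq S_{B}(L)$, and paraorthomodularity applied to the pair $(a^{\sim},a^{\prime})$ gives $S_{B}(L)\subseteq S_{\Diamond}(L)$, which combined with the chain $S_{\Diamond}(L)\subseteq S_{B}(L)\subseteq S_{K}(L)$ valid in every BZ-lattice yields the three-way equality. The paper itself states this theorem without proof (it is imported from \cite{GLP1+}), so there is nothing in-text to compare against, but your argument is the natural one and is complete.
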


As pleasing as this result may be, the class of paraorthomodular BZ$^{\ast}%
$-lattices still suffers from a major shortcoming: the paraorthomodularity
condition is quasiequational. However, the next result shows that it can be
replaced by an equation.

\begin{theorem}
\label{th:paradia} Let $\mathbf{L}$ be a BZ$^{\ast}$-lattice. The following
conditions are equivalent:

\begin{enumerate}
\item[(1)] $\mathbf{L}$ is paraorthomodular;

\item[(2)] $\mathbf{L}$ satisfies the following \emph{$\Diamond$%
-orthomodularity} condition for all $a,b\in L$:
\[
\,(a^{\sim}\vee(\Diamond a\wedge\Diamond b))\wedge\Diamond a\leq\Diamond b.
\]

\end{enumerate}
\end{theorem}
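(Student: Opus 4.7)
The plan is to recognize that (2) is essentially the orthomodular identity on the $\Diamond$-sharp skeleton $S_\Diamond(\mathbf{L})$, so the equivalence with (1) will follow once I show that paraorthomodularity of $\mathbf{L}$ is, in turn, equivalent to orthomodularity of this skeleton. The structural facts I will rely on are: (a) for every $x \in L$, $\Diamond x \in S_\Diamond(\mathbf{L})$ and $(\Diamond x)' = x^\sim$ (via Lemma \ref{basics}(i) and Definition \ref{de:bazar}(iv)); (b) $S_\Diamond(\mathbf{L})$ is closed under $\wedge, \vee, {}'$ by Lemma \ref{basics}(vii), (viii) and (a), and on it $'$ and $^\sim$ coincide, making it an ortholattice (using $S_\Diamond(\mathbf{L}) \subseteq S_B(\mathbf{L})$ for the law $p \vee p' = 1$). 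Setting $p = \Diamond a$ and $q = \Diamond b$, the inequality in (2) reads $(p' \vee (p \wedge q)) \wedge p \leq q$; since this expression is automatically $\geq p \wedge q$ and $\leq p$, the inequality is equivalent to the identity $(p' \vee (p \wedge q)) \wedge p = p \wedge q$, the orthomodular law in $S_\Diamond(\mathbf{L})$. Thus (2) is exactly the assertion that $S_\Diamond(\mathbf{L})$ is an OML.

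For $(1) \Rightarrow (2)$, I observe that paraorthomodularity is a universal Horn sentence in the signature $\langle \wedge, \vee, {}', 0, 1 \rangle$, so it transfers to the subalgebra $S_\Diamond(\mathbf{L})$. Since every element of $S_\Diamond(\mathbf{L})$ is Kleene-sharp, the restricted paraorthomodularity collapses to ordinary orthomodularity, whence (2).

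For $(2) \Rightarrow (1)$, let $a \leq b$ with $a' \wedge b = 0$. Both $a$ and $b$ are immediately Kleene-sharp, since $a \wedge a' \leq a' \wedge b = 0$, and (using $b' \leq a'$) $b \wedge b' \leq a' \wedge b = 0$. The main obstacle is to promote Kleene-sharpness to $\Diamond$-sharpness; this is where the BZ$^*$ axiom $(\ast)$ and a well-chosen instance of (2) come together. From $a \wedge a' = 0$ and $(\ast)$ one gets $1 = 0^\sim \leq a^\sim \vee a'^\sim$, so $a^\sim \vee \Box a = 1$. Moreover, Lemma \ref{basics}(ii) together with Definition \ref{de:bazar}(ii) gives $\Box a \leq a \leq \Diamond a$, while Lemma \ref{basics}(i) gives $\Diamond \Box a = \Box a$, so $\Diamond a \wedge \Box a = \Box a$. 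Feeding these into (2) with second argument $\Box a$ reduces it to $(a^\sim \vee \Box a) \wedge \Diamond a \leq \Box a$, i.e., $\Diamond a \leq \Box a$, which forces $a = \Box a = \Diamond a$ and hence $a \in S_\Diamond(\mathbf{L})$. The same reasoning gives $b \in S_\Diamond(\mathbf{L})$, so the hypothesis $a \leq b$, $a' \wedge b = 0$ is a statement inside the ortholattice $S_\Diamond(\mathbf{L})$; orthomodularity of $S_\Diamond(\mathbf{L})$ (which is (2)) then yields $a = b$.
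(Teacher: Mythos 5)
The paper you were given does not actually prove Theorem \ref{th:paradia}: it is only recalled in the preliminaries from \cite{GLP1+}, so there is no in-text argument to compare yours against. Judged on its own, your proof is correct. The reduction at its core is sound: $S_\Diamond(\mathbf{L})$ is closed under $\wedge,\vee,{}'$ (Lemma \ref{basics}(vii),(viii) plus $(\Diamond x)'=x^\sim$), on it ${}'$ and ${}^\sim$ coincide and $p\vee p'=1$ via $S_\Diamond(L)\subseteq S_B(L)$, so it is an ortholattice; and since every instance of condition (2) is an instance of $p\wedge(p'\vee(p\wedge q))\leq q$ with $p,q\in S_\Diamond(\mathbf{L})$ and conversely, (2) says exactly that $S_\Diamond(\mathbf{L})$ is orthomodular. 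Both nontrivial steps check out: paraorthomodularity is a universal Horn sentence, hence passes to the subreduct $S_\Diamond(\mathbf{L})$, where (by the equivalence quoted just before Definition \ref{de:paraorthomodular}) it is orthomodularity; and in the converse direction the promotion of the Kleene-sharp pair $a\leq b$, $a'\wedge b=0$ to $\Diamond$-sharp elements --- using $(\ast)$ to get $a^\sim\vee\Box a=1$ and then the instance of (2) with second argument $\Box a$ to get $\Diamond a\leq\Box a$, hence $\Box a=a=\Diamond a$ --- is precisely where the BZ$^{\ast}$ axiom earns its keep (your $(1)\Rightarrow(2)$ in fact works in any paraorthomodular BZ-lattice). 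The one step a careful reader will want spelled out or cited is the claim that the inequality in (2) is equivalent to the identity $p\wedge(p'\vee(p\wedge q))=p\wedge q$ and that this identity characterises orthomodularity among ortholattices: the first part follows since the left-hand side always lies between $p\wedge q$ and $p$, and the second is the order-dual of the standard identity $x\vee(x'\wedge(x\vee y))=x\vee y$, valid because the orthomodular law is self-dual (substitute $x\mapsto x'$, $y\mapsto y'$ and complement). With that reference supplied, the argument is complete.
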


Such a variety will be denoted by $\mathbb{PBZL}^{\ast}$, and its members will
be referred to as \emph{PBZ}$^{\ast}$\emph{-lattices}. {It can be seen that
every bounded lattice can be embedded as a sublattice into a PBZ*-lattice.
Consequently, $\mathbb{PBZL}^{\ast}$ satisfies no nontrivial identity in the
language of lattices.}

The naturalness of this concept is further reinforced by the circumstance that
BZ-lattices of effects of a Hilbert space, under the spectral ordering,
qualify as instances of PBZ$^{\ast}$-lattices:

\begin{theorem}
{\ \label{concreto}Let $\mathbf{H}$ be a Hilbert space. The algebra%
\[
\mathbf{E}\left(  \mathbf{H}\right)  =\left\langle \mathcal{E}\left(
\mathbf{H}\right)  ,\wedge_{s},\vee_{s},^{\prime},^{\sim},\mathbb{O}%
,\mathbb{I}\right\rangle ,
\]
(see the introduction for the notation) is a PBZ*-lattice. Moreover,
$S_{K}(\mathbf{E}\left(  \mathbf{H}\right)  )=$}$S_{\Diamond}(\mathbf{E}%
\left(  \mathbf{H}\right)  )=S_{B}(\mathbf{E}\left(  \mathbf{H}\right)
)${\ is an orthomodular subuniverse of $\mathbf{E}\left(  \mathbf{H}\right)  $
consisting of all the projection operators of $\mathbf{H}$. }
\end{theorem}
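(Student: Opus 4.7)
The plan is to verify, one by one, the defining conditions of a PBZ*-lattice for $\mathbf{E}(\mathbf{H})$, using the spectral theorem as the main tool, and then to establish the sharpness collapse. First I would invoke the Olson--de Groote result already cited in the introduction to get that $\langle \mathcal{E}(\mathbf{H}), \leq_s \rangle$ is a bounded lattice. Next, $E \mapsto E' = \mathbb{I} - E$ is easily seen to be period-two and order-reversing with respect to $\leq_s$, because its effect on the associated spectral family is the reflection $M^{E'}(\lambda) = \mathbb{I} - M^E(-\lambda^-)$, hence it is a lattice involution. To get the pseudo-Kleene condition (2) of Definition \ref{de:kleene}, I would use that $E \wedge_s E'$ lies spectrally below $\tfrac{1}{2}\mathbb{I}$ whereas $F \vee_s F'$ lies spectrally above $\tfrac{1}{2}\mathbb{I}$, so $E \wedge_s E' \leq_s F \vee_s F'$.

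Next I would verify the BZ-axioms for $E^\sim = P_{\ker(E)}$. Axiom (i) is immediate since $\mathrm{ran}(E) \perp \ker(E)$ gives $E \wedge_s P_{\ker E} = \mathbb{O}$. For (iii), monotonicity of $E \mapsto E^\sim$ reversed follows from the observation that the kernel projection can be read off from $M^E$ at $0$, which behaves antitonically in $\leq_s$. For (ii) and (iv), note that $P_{\ker(E)}$ is a projection, so $P_{\ker(E)}^\sim = \mathbb{I} - P_{\ker(E)} = P_{\ker(E)}^{\prime}$, which gives (iv) and shows that $E^{\sim\sim}$ is the projection onto $\overline{\mathrm{ran}(E)}$, dominating $E$, whence (ii). Condition $(\ast)$ then follows because $E^\sim \vee_s E'^\sim$ is a join of projections (itself a projection in $\leq_s$) that contains both $\ker(E)$ and $\ker(E') = \ker(\mathbb{I}-E)$, and thus dominates $(E \wedge_s E')^\sim$.

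To handle paraorthomodularity, suppose $E \leq_s F$ and $E' \wedge_s F = \mathbb{O}$ in $\mathcal{E}(\mathbf{H})$. Comparing spectral families one shows that $F - E$ must be annihilated, whence $E = F$; alternatively one can reduce to the orthomodularity of the projection lattice after observing that the hypothesis forces $E$ and $F$ to have the same spectral projection at every level. The sharpness collapse is then routine: every projection $P$ trivially satisfies $P \wedge_s P' = \mathbb{O}$, $P^{\sim\sim} = P$, and $P \vee_s P^\sim = \mathbb{I}$, so it belongs to all three classes $S_K$, $S_\Diamond$, $S_B$; conversely, if $E \wedge_s E' = \mathbb{O}$ for an effect $E$, a spectral-family argument shows that $E$ is idempotent, hence a projection. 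The fact that the projections form an orthomodular subuniverse is classical, since on them $\leq_s$ reduces to the usual order and the operations $\wedge_s, \vee_s, ', {}^\sim$ all preserve projecthood.

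The main obstacle I anticipate is the operator-theoretic verification of paraorthomodularity, where one must argue carefully via the spectral calculus without appealing circularly to the later abstract theory of $\mathbb{PBZL}^\ast$. The remaining clauses either follow from results already cited (Olson, de Groote, \cite{GLP1+}) or from straightforward manipulations of spectral families; the collapse $S_K = S_\Diamond = S_B$ at the concrete level will in fact become, a posteriori, a special case of Theorem \ref{co:collassone} once $\mathbf{E}(\mathbf{H})$ has been shown to be paraorthomodular and to satisfy $(\ast)$.
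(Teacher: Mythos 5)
You should first be aware that the paper contains no proof of Theorem \ref{concreto}: it sits in the Preliminaries and is imported verbatim from \cite{GLP1+}, so there is no in-text argument to compare yours against. Judged on its own terms, your architecture is sensible (Olson--de Groote for the lattice structure, the pivot $\tfrac12\mathbb{I}$ for the pseudo-Kleene inequality, projections for the sharpness collapse, and the reduction of $S_K=S_\Diamond=S_B$ to Theorem \ref{co:collassone} once paraorthomodularity and $(\ast)$ are in hand), and several of your steps do go through; a minor slip is that the spectral family of $\mathbb{I}-E$ is $M^{E'}(\lambda)=\mathbb{I}-M^{E}\left(  (1-\lambda)^{-}\right)$, not $\mathbb{I}-M^{E}(-\lambda^{-})$.

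There are, however, two genuine gaps. First, your justification of condition $(\ast)$ is a non sequitur: knowing that $E^{\sim}\vee_{s}E'^{\sim}$ dominates both $P_{\ker E}$ and $P_{\ker E'}$ says nothing about whether it dominates $(E\wedge_{s}E')^{\sim}=P_{\ker(E\wedge_{s}E')}$; what must be shown is the inclusion $\ker(E\wedge_{s}E')\subseteq\overline{\ker E+\ker E'}$, and that is the actual content of $(\ast)$. The clean route is the identity $M^{E\wedge_{s}F}(\lambda)=M^{E}(\lambda)\vee M^{F}(\lambda)$ for the spectral meet, which at $\lambda=0$ gives $\ker(E\wedge_{s}E')=\overline{\ker E+\ker E'}$ and hence equality in $(\ast)$; nothing in your sketch supplies this. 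Second, paraorthomodularity --- which you yourself identify as the main obstacle --- is not proved: ``comparing spectral families one shows that $F-E$ must be annihilated'' and ``the hypothesis forces $E$ and $F$ to have the same spectral projection at every level'' are restatements of the goal, not arguments. The workable version again passes through the meet formula: $E'\wedge_{s}F=\mathbb{O}$ unwinds to $M^{E}\left(  (1-\lambda)^{-}\right)\wedge M^{F}(\lambda)^{\perp}=\mathbb{O}$ for the relevant $\lambda$, which combined with $M^{F}(\lambda)\leq M^{E}(\lambda)$ and the orthomodularity of the projection lattice forces $M^{F}(\lambda)=M^{E}(\lambda)$ throughout; without some such explicit use of orthomodularity of projections at the level of the spectral families, the step does not close. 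Until these two points are filled in, the proposal establishes only that $\mathbf{E}(\mathbf{H})$ is a BZ-lattice with pseudo-Kleene reduct, not that it is a PBZ*-lattice.
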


All orthomodular lattices are, of course, {PBZ*-lattices. }Further examples of
{PBZ*-lattices are given by those algebras in this class which are
\textquotedblleft as far apart as possible\textquotedblright\ from
orthomodular lattices. In any orthomodular lattice }${\mathbf{L}}${,
$S_{K}(L)=L$; on the other hand, $\mathbf{L}$ is an \emph{antiortholattice} if
$S_{K}(L)=\left\{  0,1\right\}  $. We denote by $\mathbb{AOL}$ the class of
antiortholattices. For all }$n\geq1$, the $n$-element Kleene chain
{$\mathbf{D}_{n}$, such that }$a^{\sim}=0$ iff $a>0${, is an antiortholattice.
}

{Some elementary properties of antiortholattices follow. }

\begin{lemma}
{\ }

\begin{enumerate}
\item {$\mathbf{L}\in$ $\mathbb{AOL}$ iff, for all }$a\in L$, $a^{\sim}=0$ if
$a>0$ and $a^{\sim}=1$ otherwise;

\item {Every $\mathbf{L}\in$ $\mathbb{AOL}$ is directly indecomposable;}

\item {$\mathbb{AOL}$ is a proper universal class. }
\end{enumerate}
\end{lemma}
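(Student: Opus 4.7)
The plan is to dispatch the three clauses in order, leaning on the arithmetical identities of Lemma \ref{basics} together with the BZ${}^{\ast}$-axiom $(\ast)$ for the harder direction of (1).

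For (1), the forward direction hinges on the observation that $a^{\sim}$ is itself Kleene-sharp for \emph{every} $a$: by Lemma \ref{basics}(iv) we have $(a^{\sim})' = a^{\sim\sim}$, while the noncontradiction axiom applied to $a^{\sim}$ gives $a^{\sim} \wedge a^{\sim\sim} = 0$; hence $a^{\sim} \wedge (a^{\sim})' = 0$. In an antiortholattice this forces $a^{\sim} \in \{0,1\}$. If $a > 0$, then $a^{\sim} = 1$ combined with Lemma \ref{basics}(ii) would yield $1 = a^{\sim} \leq a'$, so $a = 0$, a contradiction; therefore $a^{\sim} = 0$. If $a = 0$, the assumption $0^{\sim} = 0$ would give, via (iv) applied to $0$, $0' = 0^{\sim\prime} = 0^{\sim\sim} = 0$, collapsing $\mathbf{L}$ to the trivial algebra (which the claim handles vacuously); so $0^{\sim} = 1$. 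For the converse direction I would invoke $(\ast)$: if $a \wedge a' = 0$ with $0 < a < 1$, then by hypothesis both $a^{\sim} = 0$ and $(a')^{\sim} = 0$, whereas $(a \wedge a')^{\sim} = 0^{\sim} = 1$, so $(\ast)$ would force $1 \leq 0$, a contradiction in any nontrivial algebra. Thus $S_K(L) \subseteq \{0,1\}$, and the reverse inclusion is trivial.

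For (2), suppose $\mathbf{L} \cong \mathbf{A} \times \mathbf{B}$ with both factors nontrivial, and consider $e = (1_A, 0_B)$. Its orthocomplement is $(0_A, 1_B)$ and $e \wedge e' = (0_A, 0_B) = 0_L$, so $e \in S_K(L)$; but $e \notin \{0_L, 1_L\}$, contradicting $\mathbf{L} \in \mathbb{AOL}$. Hence one factor must be trivial.

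For (3), the equivalent characterisation obtained in (1) expresses membership in $\mathbb{AOL}$, over the equational class $\mathbb{PBZL}^{\ast}$, by the closed equation $0^{\sim} = 1$ together with the universal disjunction of equations $\forall a\,(a = 0 \vee a^{\sim} = 0)$. Both are universal first-order sentences, so $\mathbb{AOL}$ is a universal class. To see that it is \emph{proper} (i.e.\ not a variety), I would use (2): any product of two nontrivial antiortholattices — for instance $\mathbf{D}_2 \times \mathbf{D}_2$ — is by (2) not directly indecomposable, hence not in $\mathbb{AOL}$; so $\mathbb{AOL}$ is not closed under finite direct products and therefore is not equational.

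The only delicate point is the forward direction of (1): one must notice that Kleene-sharpness of $a^{\sim}$ (not of $a$ itself) is what makes the antiortholattice hypothesis bite, and separately treat the boundary case $a = 0$ via (iv). Once (1) is in place, (2) and (3) are quick.
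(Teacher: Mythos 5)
Your proof is correct; the paper itself states this lemma without proof (it is recalled from \cite{GLP1+}), and your argument is the standard one: sharpness of $a^{\sim}$ plus $S_K(L)=\{0,1\}$ for the forward direction of (1), the axiom $(\ast)$ for the converse, the central element $(1_A,0_B)$ for (2), and failure of closure under products (e.g.\ $\mathbf{D}_2\times\mathbf{D}_2$) for (3). One small slip: the identity $(a^{\sim})'=a^{\sim\sim}$ is axiom (iv) of Definition \ref{de:bazar}, not item (iv) of Lemma \ref{basics}; also, the case $a=0$ can be handled more directly, since $1^{\sim}=1\wedge 1^{\sim}=0$ and hence $1\leq 1^{\sim\sim}=0^{\sim}$ in every BZ-lattice.
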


An easy observation that paves the way for the study of the lattice
{$\mathbf{L}_{\mathbb{PBZL}^{\ast}}$ of subvarieties of $\mathbb{PBZL}^{\ast}
$ is the fact that the $2$-element Boolean algebra is a subalgebra of every
nontrivial PBZ*-lattice. Therefore, $\mathbf{L}_{\mathbb{PBZL}^{\ast}} $ has a
single atom, the variety $\mathbb{BA}$ of Boolean algebras. It is well-known
that $\mathbb{BA}$ has a single orthomodular cover \cite[Cor. 3.6]{BH}: the
variety $V\left(  \mathbf{MO}_{2}\right)  $, generated by the simple modular
ortholattice with $4$ atoms. Two questions naturally arise: how many
non-orthomodular covers of the Boolean variety are there? Is it possible to
give a finite equational basis for the variety $V\left(  \mathbb{AOL}\right)
$ generated by antiortholattices? The next theorem provides a solution to
these problems.}

\begin{theorem}
\label{caniggia}

\begin{enumerate}
\item {An equational basis for $V\left(  \mathbb{AOL}\right)  $ relative to
$\mathbb{PBZL}^{\ast}$ is given by the identities%
\begin{align*}
\text{(AOL1) }  &  \left(  x^{\sim}\vee y^{\sim}\right)  \wedge\left(
\Diamond x\vee z^{\sim}\right)  \approx\left(  \left(  x^{\sim}\vee y\right)
\wedge\left(  \Diamond x\vee z\right)  \right)  ^{\sim}\text{;}\\
\text{(AOL2) }  &  x\approx\left(  x\wedge y^{\sim}\right)  \vee\left(
x\wedge\Diamond y\right)  \text{;}\\
\text{(AOL3) }  &  x\approx\left(  x\vee y^{\sim}\right)  \wedge\left(
x\vee\Diamond y\right)  \text{.}%
\end{align*}
}

\item {\ $\mathbb{OML}\cap V\left(  \mathbb{AOL}\right)  =\mathbb{BA}$};

\item There is a single non-orthomodular cover of {$\mathbb{BA}$, the variety
}$V\left(  {\mathbf{D}_{3}}\right)  $ generated by the $3$-element
antiortholattice chain.
\end{enumerate}
\end{theorem}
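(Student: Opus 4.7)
The plan is to address the three assertions in order.

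For part (1), I would prove both inclusions of the subvariety of $\mathbb{PBZL}^{\ast}$ defined by (AOL1)--(AOL3) and $V(\mathbb{AOL})$ separately. The forward direction, that every antiortholattice satisfies (AOL1)--(AOL3), follows by case analysis on whether each variable is $0$ or positive: recall that in an antiortholattice $a^{\sim}=0$ and hence $\Diamond a=1$ when $a>0$, while $0^{\sim}=1$ and $\Diamond 0=0$, so each identity collapses to a trivial lattice equality under every assignment. For the converse, by Birkhoff's subdirect representation theorem it suffices to show that every subdirectly irreducible $\mathbf{L}\in\mathbb{PBZL}^{\ast}$ satisfying (AOL1)--(AOL3) is an antiortholattice. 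Given a sharp $a\in L$, Theorem \ref{co:collassone} forces $a^{\sim}=a^{\prime}$ and $\Diamond a=a$, so (AOL2) and (AOL3) with $y=a$ specialise to $x=(x\wedge a)\vee(x\wedge a^{\prime})$ and $x=(x\vee a)\wedge(x\vee a^{\prime})$; combined with $a\wedge a^{\prime}=0$ and $a\vee a^{\prime}=1$, this exhibits $a$ as a central element and yields an isomorphism $\mathbf{L}\cong[0,a]\times[0,a^{\prime}]$ of PBZ*-lattices, with (AOL1) supplying the compatibility of $^{\sim}$ with the decomposition. Subdirect irreducibility then forces $a\in\{0,1\}$, whence $S_{K}(L)=\{0,1\}$.

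For part (2), the inclusion $\mathbb{BA}\subseteq\mathbb{OML}\cap V(\mathbb{AOL})$ is immediate because $\mathbf{D}_{2}$ is both Boolean and an antiortholattice. Conversely, any $\mathbf{L}\in\mathbb{OML}\cap V(\mathbb{AOL})$ satisfies $y^{\sim}=y^{\prime}$ and $\Diamond y=y$ for every $y$ (orthomodularity makes every element Kleene-sharp, and the preliminaries identify $\mathbb{OL}$ with the BZ-subvariety defined by $x^{\sim}=x^{\prime}$), so (AOL2) collapses to the identity $x=(x\wedge y^{\prime})\vee(x\wedge y)$; this is the classical distributivity identity that forces an orthomodular lattice to be Boolean.

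For part (3), I would first verify that $V(\mathbf{D}_{3})$ covers $\mathbb{BA}$. The algebra $\mathbf{D}_{3}$ is simple (any proper nontrivial congruence would identify $m$ with $0$ or $1$ and collapse the algebra), and $\mathbb{PBZL}^{\ast}$ is congruence-distributive because it has a lattice reduct; hence Jónsson's lemma places every subdirectly irreducible member of $V(\mathbf{D}_{3})$ in $HS(\mathbf{D}_{3})=\{\mathbf{D}_{2},\mathbf{D}_{3}\}$, leaving $\mathbb{BA}$ as the only proper nontrivial subvariety. Conversely, let $\mathbb{V}$ be any non-orthomodular cover of $\mathbb{BA}$ and pick a subdirectly irreducible non-orthomodular $\mathbf{L}\in\mathbb{V}$, so $\mathbb{V}=V(\mathbf{L})$ by minimality. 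Non-orthomodularity produces $a\in L$ with $c=a\wedge a^{\prime}>0$, whence $0<c\leq c^{\prime}<1$, and $(\ast)$ yields $c^{\sim}=a^{\sim}\vee\Box a$. I would then exhibit $\mathbf{D}_{3}\in V(\mathbf{L})$ by analysing the finite subalgebra $\mathbf{S}$ of $\mathbf{L}$ generated by $a$ (whose elements are controlled by the BZ-arithmetic of Lemma \ref{basics} together with $(\ast)$) and quotienting $\mathbf{S}$ by the congruence collapsing $a^{\sim}$ and $\Box a$ to $0$ (forcing $c^{\sim}=0$) and identifying $c$ with $c^{\prime}$: the quotient is the chain $\{0,[c],1\}$ with $[c]^{\prime}=[c]$ and $[c]^{\sim}=0$, namely $\mathbf{D}_{3}$. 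Hence $V(\mathbf{D}_{3})\subseteq V(\mathbf{L})=\mathbb{V}$, and the cover property of $\mathbb{V}$ forces equality.

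The main obstacle is the extraction step at the end of part (3): checking that the proposed identification really is a PBZ*-lattice congruence (most delicately, that it respects $^{\sim}$) and that $[c]$ does not degenerate to $0$ or $1$. This is where $(\ast)$ and the $\Diamond$-orthomodularity of Theorem \ref{th:paradia} must do real work, controlling how $^{\sim}$ interacts with meets and joins in $\mathbf{S}$ and distinguishing the three classes.
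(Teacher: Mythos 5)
A preliminary remark: Theorem \ref{caniggia} is stated in the preliminaries as a result recalled from \cite{GLP1+}, and this paper contains no proof of it, so your proposal can only be assessed on its own terms. For parts (1) and (2) your strategy is the natural one and essentially sound. The case split $y=0$ versus $y>0$ correctly verifies (AOL1)--(AOL3) in antiortholattices, and reducing part (2) to the commutation identity $x=(x\wedge y)\vee(x\wedge y^{\prime})$ in an orthomodular lattice is standard. One caution on part (1): for a complemented pair $(a,a^{\prime})$, the two identities $x=(x\wedge a)\vee(x\wedge a^{\prime})$ and $x=(x\vee a)\wedge(x\vee a^{\prime})$ do not by themselves make $x\mapsto(x\wedge a,\,x\wedge a^{\prime})$ an isomorphism onto $[0,a]\times[0,a^{\prime}]$ even at the level of lattice reducts (join-preservation and surjectivity require the neutrality of $a$, i.e.\ $(x\vee y)\wedge a=(x\wedge a)\vee(y\wedge a)$), and the compatibility of $^{\sim}$ with the decomposition is asserted rather than derived from (AOL1). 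That is precisely the technical content of the direction you are sketching, so it must be carried out, but the plan is fixable.

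Part (3) contains a genuine gap, which you yourself flag. The covering argument for $V(\mathbf{D}_{3})$ over $\mathbb{BA}$ via simplicity of $\mathbf{D}_{3}$ and J\'{o}nsson's Lemma is fine, as is the reduction of uniqueness to exhibiting $\mathbf{D}_{3}\in V(\mathbf{L})$ for a subdirectly irreducible non-orthomodular $\mathbf{L}$ (and the existence of $a$ with $a\wedge a^{\prime}>0$ does follow, via Theorems \ref{co:collassone} and \ref{th:paradia}). But the extraction of $\mathbf{D}_{3}$ is not a proof: you propose to quotient ``the finite subalgebra generated by $a$'' by ``the congruence collapsing $a^{\sim}$ and $\Box a$ to $0$ and identifying $c$ with $c^{\prime}$'' without establishing (i) that this subalgebra is finite or otherwise tractable (one-generated PBZ*-lattices are not obviously finite), (ii) that the congruence generated by those pairs is proper --- forcing $a^{\sim}\equiv 0$ collapses all of $[0,a^{\sim}]$ to $0$ and, via the involution, $[\Diamond a,1]$ to $1$, and nothing in your sketch prevents this cascade from absorbing $[c]$ into $[0]$ or $[1]$ and trivialising the quotient --- or (iii) that the surviving quotient is $\mathbf{D}_{3}$ rather than some longer chain or non-chain. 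Since the whole uniqueness claim rests on this step, part (3) is incomplete as it stands. A more promising device is to hunt for an element $e$ with $0<e\leq e^{\prime}$ and $e^{\sim}=0$: then $\left\{  0,e,e^{\prime},1\right\}  $ is automatically a subuniverse (note $e^{\prime\sim}\leq e^{\sim}=0$) isomorphic to $\mathbf{D}_{3}$ or $\mathbf{D}_{4}$, and $\mathbf{D}_{3}\in H(\mathbf{D}_{4})$; the real work, where $(\ast)$ and $\Diamond$-orthomodularity must intervene, is producing such an $e$ from $c=a\wedge a^{\prime}$ when $c^{\sim}\neq 0$, e.g.\ by relativising to $\Diamond c$ and using $\Box(a\wedge a^{\prime})=0$ from Lemma \ref{basics}(ix).
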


We recall from \cite{CGP} that $V\left(  \mathbf{D}_{3}\right)  $ is
axiomatised relative to $V\left(  \mathbb{AOL}\right)  $ by the identities%
\begin{align*}
\text{(DIST) }x\wedge\left(  y\vee z\right)   &  \approx\left(  x\wedge
y\right)  \vee\left(  x\wedge z\right)  ;\\
\text{(SDM)\ }\left(  x\wedge y\right)  ^{\sim}  &  \approx x^{\sim}\vee
y^{\sim};\\
\text{(SK) }x\wedge\Diamond y  &  \leq\square x\vee y.
\end{align*}

\section{More on the lattice of PBZ* varieties\label{porto}}

In this section, we intend to explore in greater detail the structure {of
$\mathbf{L}_{\mathbb{PBZL}^{\ast}}$, extending the preliminary results on its
structure contained in }\cite{GLP1+}. In particular, we will focus on the
antiortholattice side of {$\mathbf{L}_{\mathbb{PBZL}^{\ast}}$ and try to
investigate some properties of }$V\left(  \mathbb{AOL}\right)  $ and some of
its notable subvarieties. The only exception to this policy is the next lemma,
which disproves the natural conjecture to the effect that the join of the
orthomodular variety $\mathbb{OML}$ and the antiortholattice variety $V\left(
\mathbb{AOL}\right)  $ covers the whole of $\mathbb{PBZL}^{\mathbb{\ast}}$.

\begin{lemma}
$V\left(  \mathbb{AOL}\right)  \vee\mathbb{OML}<\mathbb{PBZL}^{\mathbb{\ast}}
$.
\end{lemma}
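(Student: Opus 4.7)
The plan is to exhibit a PBZ*-lattice $\mathbf{L}$ falsifying some identity $\varepsilon$ that holds in every orthomodular lattice and in every antiortholattice; this suffices, since such an $\varepsilon$ is then automatically valid throughout $V(\mathbb{AOL})\vee\mathbb{OML}$. As a candidate identity I would take
\[
\varepsilon\colon\quad \Diamond(x\wedge\Diamond y)\approx\Diamond x\wedge\Diamond y.
\]
In any $\mathbf{M}\in\mathbb{OML}$ one has $\Diamond z=z$ for every $z$, so both sides of $\varepsilon$ collapse to $x\wedge y$. In any $\mathbf{M}\in\mathbb{AOL}$, $\Diamond z=1$ when $z>0$ and $\Diamond z=0$ when $z=0$: the case $y=0$ makes both sides $0$, while $y>0$ makes both sides $\Diamond x$. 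Hence $\varepsilon$ is an identity of both $V(\mathbb{AOL})$ and $\mathbb{OML}$, and therefore of their join.

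For the counterexample I would take $\mathbf{L}$ to be the horizontal sum of $\mathbf{MO}_2$ and $\mathbf{D}_3$, i.e.\ the bounded lattice obtained by gluing the two algebras along their common bounds $0,1$, with $^{\sim}$ defined componentwise. Write $a_1,a_2,a_3,a_4$ for the atoms of the $\mathbf{MO}_2$-part (with $a_1'=a_2$, $a_3'=a_4$, $a_i^{\sim}=a_i'$) and $c$ for the middle element of $\mathbf{D}_3$ (with $c'=c$, $c^{\sim}=0$, $\Diamond c=1$). Setting $x=c$ and $y=a_1$: the atom $a_1$ is Kleene-sharp, whence $\Diamond a_1=a_1$, but $c$ and $a_1$ lie in different components of the horizontal sum, so $c\wedge a_1=0$. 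The left-hand side of $\varepsilon$ therefore evaluates to $\Diamond 0=0$, while the right-hand side evaluates to $\Diamond c\wedge\Diamond a_1=1\wedge a_1=a_1\neq 0$. Consequently $\mathbf{L}\not\models\varepsilon$, so $\mathbf{L}\notin V(\mathbb{AOL})\vee\mathbb{OML}$.

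What remains is a routine verification that $\mathbf{L}\in\mathbb{PBZL}^{\ast}$. The pseudo-Kleene axiom, the BZ-lattice identities, and condition $(\ast)$ all reduce to checks inside each component, because $^{\sim}$ preserves the components and the only across-component meets and joins involve the shared bounds $0,1$. Paraorthomodularity holds essentially vacuously: for each $a<b$ in $\mathbf{L}$ one computes $a'\wedge b\neq 0$ directly from the horizontal-sum structure, so the premise of the implication is never satisfied by a strict pair. The main obstacle I anticipate is spotting $\varepsilon$: it must simultaneously exploit the triviality of $\Diamond$ in $\mathbb{OML}$ and its $\{0,1\}$-valuedness in $\mathbb{AOL}$, yet be violated precisely when a non-Kleene-sharp antiortholattice element is met with an orthomodular atom lying in a different component. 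Once $\varepsilon$ is in hand, every remaining step is a small case analysis.
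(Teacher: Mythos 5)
Your proposal is correct, and it follows the same overall strategy as the paper: separate the join from $\mathbb{PBZL}^{\ast}$ by an identity that holds in both $\mathbb{AOL}$ and $\mathbb{OML}$ (hence in $V(\mathbb{AOL})\vee\mathbb{OML}$, whose equational theory is the intersection of the two) and then exhibit a PBZ*-lattice refuting it. The witnesses differ, though. The paper uses the identity $x\vee y\approx((x\vee y)\wedge y^{\sim})\vee((x\vee y)\wedge\Diamond y)$, an instance of (AOL2), and refutes it in a bespoke PBZ*-lattice given by a Hasse diagram, whose PBZ*-axioms must then be checked directly; you use $\Diamond(x\wedge\Diamond y)\approx\Diamond x\wedge\Diamond y$ and refute it in $\mathbf{MO}_{2}\boxplus\mathbf{D}_{3}$. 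Both identities exploit the same dichotomy --- $\Diamond$ is the identity map on orthomodular lattices and is $\{0,1\}$-valued on antiortholattices --- and your case analyses are correct, as is the computation $\Diamond(c\wedge\Diamond a_{1})=\Diamond 0=0\neq a_{1}=\Diamond c\wedge\Diamond a_{1}$. Your choice of counterexample is arguably the tidier one: the ``routine verification'' that $\mathbf{MO}_{2}\boxplus\mathbf{D}_{3}\in\mathbb{PBZL}^{\ast}$ need not be done from scratch, since the horizontal sum of an orthomodular lattice with a PBZ*-lattice is always a PBZ*-lattice (recalled in Section 4 and established in \cite{GLP1+}), whereas the paper's ad hoc example carries no such guarantee. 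A minor economy: the four-element Boolean algebra would do in place of $\mathbf{MO}_{2}$, since all your argument needs is a $\Diamond$-sharp element strictly between the bounds lying in a block different from that of $c$.
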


\begin{proof}
The identity%
\[
x\vee y\approx\left(  \left(  x\vee y\right)  \wedge y^{\sim}\right)
\vee\left(  \left(  x\vee y\right)  \wedge\lozenge y\right)
\]
is readily seen to hold both in $\mathbb{AOL}$ and in $\mathbb{OML}$. However,
it fails in $\mathbb{PBZL}^{\mathbb{\ast}}$. In fact, consider the
PBZ*-lattice displayed in the next figure:%
\begin{center}
[Missing Picture]
\end{center}
with $a^{\sim}=b^{\prime\sim}=b $, $b^{\sim}=b^{\prime}$ and $a^{\prime\sim
}=c^{\sim}=0$. Our conclusion follows upon observing that%
\[
c\vee a=c\neq a=\left(  \left(  c\vee a\right)  \wedge a^{\sim}\right)
\vee\left(  \left(  c\vee a\right)  \wedge\lozenge a\right)  \text{.}%
\]

\end{proof}

Let us now comfortably settle within the boundaries of $V\left(
\mathbb{AOL}\right)  $. For a start, consider the next three varieties:
\begin{align*}
\mathbb{V}_{1}  &  =Mod\left(  Eq\left(  \mathbb{AOL}\right)  \cup\left\{
\text{DIST}\right\}  \right)  \text{;}\\
\mathbb{V}_{2}  &  =Mod\left(  Eq\left(  \mathbb{AOL}\right)  \cup\left\{
\text{SDM}\right\}  \right)  \text{;}\\
\mathbb{V}_{3}  &  =Mod\left(  Eq\left(  \mathbb{AOL}\right)  \cup\left\{
\text{DIST, SDM}\right\}  \right)  \text{.}%
\end{align*}

\begin{lemma}
(i) $V\left(  \mathbf{D}_{3}\right)  <\mathbb{V}_{3}$; (ii) $\mathbb{V}_{1}$
and $\mathbb{V}_{2}$ are incomparable varieties, hence strictly smaller than
$V\left(  \mathbb{AOL}\right)  $.
\end{lemma}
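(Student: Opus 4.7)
For (i), the inclusion $V(\mathbf{D}_{3}) \subseteq \mathbb{V}_{3}$ is immediate from the relative axiomatisation of $V(\mathbf{D}_{3})$ recalled just before the lemma, since $V(\mathbf{D}_{3})$ is cut out of $V(\mathbb{AOL})$ by DIST, SDM and SK whereas $\mathbb{V}_{3}$ drops SK. To witness strictness I would use the antiortholattice chain $\mathbf{D}_{4}$. It is distributive, and SDM holds in it because in any antiortholattice chain the meet of two nonzero elements is nonzero, so both sides of SDM evaluate to $0$ on positive inputs and to $1$ as soon as one argument is $0$. However SK fails: for the two proper elements $a < b$ of $\mathbf{D}_{4}$ one computes $b \wedge \Diamond a = b \wedge 1 = b$ and $\Box b \vee a = 0 \vee a = a$, so $b \wedge \Diamond a \not\leq \Box b \vee a$. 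Since SK holds identically in $V(\mathbf{D}_{3})$, this gives $\mathbf{D}_{4} \in \mathbb{V}_{3} \setminus V(\mathbf{D}_{3})$.

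For (ii), the plan is to produce an antiortholattice in each of $\mathbb{V}_{1} \setminus \mathbb{V}_{2}$ and $\mathbb{V}_{2} \setminus \mathbb{V}_{1}$; both strict inclusions in $V(\mathbb{AOL})$ then follow by incomparability. For the first, I consider the seven-element lattice $L = \{0, a, b, c, a', b', 1\}$ with strict order $0 < a, b < c < a', b' < 1$ and both $\{a, b\}$ and $\{a', b'\}$ pairs of incomparables (realised concretely as the sublattice $\{(0,0), (c,0), (0,c), (c,c), (1,c), (c,1), (1,1)\}$ of $\mathbf{D}_{3} \times \mathbf{D}_{3}$), so $L$ is distributive. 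I endow it with the involution swapping $a \leftrightarrow a'$, $b \leftrightarrow b'$, $0 \leftrightarrow 1$ and fixing $c$. The Kleene inequality follows by direct inspection: the possible values of $x \wedge x'$ lie in $\{0, a, b, c\}$ and those of $y \vee y'$ in $\{c, a', b', 1\}$, and each element of the former is below each element of the latter. Only $0$ and $1$ are Kleene-sharp, so $L$ is an antiortholattice and hence a PBZ*-lattice; yet SDM fails at the pair of atoms: $(a \wedge b)^{\sim} = 0^{\sim} = 1 \neq 0 = a^{\sim} \vee b^{\sim}$.

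For the second example I would take $M = \{0, s, p, q, r, t, 1\}$, obtained from an $M_{3}$ on $\{s, p, q, r, t\}$ by adjoining a stem $0 < s$ and a cap $t < 1$, with the involution $0 \leftrightarrow 1$, $s \leftrightarrow t$, $p \leftrightarrow q$ and $r$ self-dual. This lattice fails distributivity since $p \wedge (q \vee r) = p \neq s = (p \wedge q) \vee (p \wedge r)$; it is straightforwardly pseudo-Kleene, and $S_{K}(M) = \{0, 1\}$, so $M$ is an antiortholattice. SDM holds trivially because $s$ is the unique atom of $M$, hence the meet of two nonzero elements is always $\geq s > 0$, making both sides of SDM evaluate to $0$ on positive inputs (and to $1$ on inputs involving $0$). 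The main obstacle throughout is the design of these two examples: finite lattices admitting a Kleene involution with $S_{K} = \{0, 1\}$ are surprisingly restrictive, and standard small candidates such as $\mathbf{2}^{2}$, $M_{3}$ or $N_{5}$ all fall short for one reason or another. Once a suitable underlying lattice and involution are in place, the BZ-axioms, condition $(\ast)$ and paraorthomodularity for the antiortho ${}^{\sim}$ are all automatic --- paraorthomodularity holds even vacuously in both $L$ and $M$, since no proper element $a$ admits some $b \geq a$ with $a' \wedge b = 0$.
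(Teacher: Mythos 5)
Your proof is correct and follows essentially the same route as the paper: $\mathbf{D}_{4}$ witnesses (i) via the failure of SK, and (ii) is settled by exhibiting a distributive antiortholattice with two incomparable atoms (so SDM fails via $(a\wedge b)^{\sim}=0^{\sim}=1>0=a^{\sim}\vee b^{\sim}$) together with a non-distributive antiortholattice with a unique atom (so SDM holds trivially). The paper's two figures for (ii) are missing from the source, but its stated SDM-failure computation matches yours exactly, so the strategy and the key calculations coincide.
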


\begin{proof}
(i) It is easy to check that the algebra $\mathbf{D}_{4}$ belongs to
$\mathbb{V}_{3}$. Now, let $a,a^{\prime}$ denote the elements of $D_{4}$
different from the bounds, s.t. $a<a^{\prime}$. We have that $a^{\prime
}=a^{\prime}\wedge\Diamond a>\square a^{\prime}\vee a=a$, whence
$\mathbf{D}_{4}$ fails SK.

(ii) The antiortholattice%
\begin{center}
[Missing Picture]
\end{center}
is a non-distributive algebra in $\mathbb{V}_{2}$. The next distributive
antiortholattice:%
\begin{center}
[Missing Picture]
\end{center}
fails SDM because $\left(  a\wedge b\right)  ^{\sim}=0^{\sim}=1>0=0\vee
0=a^{\sim}\vee b^{\sim}$.
\end{proof}

\subsection{The variety generated by antiortholattices}

Next, we prove a representation theorem for the subdirectly irreducible
algebras in $V(\mathbb{AOL})$. We intend to put to good use a certain kind of
\emph{twist construction}, along the lines of what is done in \cite{Kalman,
Cignoli, PSK, GLP1+} and in several other papers on pseudo-Kleene algebras and
related structures.

Let $\mathbf{L}$ be a PBZ*-lattice. We define:%
\begin{align*}
N\left(  \mathbf{L}\right)   &  =\left\{  a\in L:a\leq a^{\prime}\right\}
\text{ (the set of \emph{negative} elements);}\\
P\left(  \mathbf{L}\right)   &  =\left\{  a\in L:a^{\prime}\leq a\right\}
\text{ (the set of \emph{positive} elements);}\\
N\left(  \mathbf{L}\right)  ^{+}  &  =\left\{  a\in L:a<a^{\prime}\right\}
\text{ (the set of \emph{strictly negative} elements);}\\
P\left(  \mathbf{L}\right)  ^{+}  &  =\left\{  a\in L:a^{\prime}<a\right\}
\text{ (the set of \emph{strictly positive} elements);}%
\end{align*}

By the pseudo-Kleene quasiequation, $N\left(  \mathbf{L}\right)  $ and
$P\left(  \mathbf{L}\right)  $ are subuniverses of the lattice reduct of
$\mathbf{L}$. With a slight notational abuse, we continue to denote by the
same symbols $N\left(  \mathbf{L}\right)  $ and $P\left(  \mathbf{L}\right)
$, respectively, the corresponding lattice subreducts of $\mathbf{L}$.

\begin{lemma}
\label{gustoso}Let $\mathbf{L}\in V(\mathbb{AOL})$ be a subdirectly
irreducible algebra. Then:

\begin{enumerate}
\item $\mathbf{L}$ is an antiortholattice;

\item $P\left(  {\mathbf{L}}\right)  \cup N\left(  {\mathbf{L}}\right)  =L$.
\end{enumerate}
\end{lemma}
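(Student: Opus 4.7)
The plan is to handle (1) via standard universal-algebraic tools, and (2) by showing that the failure of $P(\mathbf{L}) \cup N(\mathbf{L}) = L$ would force a subdirect decomposition.

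For (1), observe first that every algebra in $\mathbb{PBZL}^{\ast}$ has a lattice reduct, so $V(\mathbb{AOL})$ is a congruence-distributive variety. Jónsson's Lemma therefore applies: any subdirectly irreducible $\mathbf{L} \in V(\mathbb{AOL})$ belongs to $HSP_U(\mathbb{AOL})$. Since the preceding lemma asserts that $\mathbb{AOL}$ is a universal class, it is closed under subalgebras and ultraproducts; whence $SP_U(\mathbb{AOL}) \subseteq \mathbb{AOL}$ and so $\mathbf{L} \in H(\mathbb{AOL})$. It then suffices to verify that any non-trivial homomorphic image of an antiortholattice is itself an antiortholattice: if $\pi\colon \mathbf{K} \twoheadrightarrow \mathbf{L}$ with $\mathbf{K} \in \mathbb{AOL}$ and $[0]_{\mathbf{L}} \ne [1]_{\mathbf{L}}$, then for any $[x] \ne [0]$ we have $x \ne 0$ in $\mathbf{K}$, whence $x^{\sim} = 0$ and so $[x]^{\sim} = [0]$; while $[0]^{\sim} = [0^{\sim}] = [1]$. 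The characterization in part 1 of the preceding lemma then confirms $\mathbf{L} \in \mathbb{AOL}$.

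For (2), use (1) to assume $\mathbf{L}$ is an antiortholattice. Suppose toward contradiction there exists $c \in L$ with $c \notin P(\mathbf{L}) \cup N(\mathbf{L})$; equivalently, $c$ and $c^{\prime}$ are incomparable, so that
\[
c \wedge c^{\prime} < c,\, c^{\prime} < c \vee c^{\prime}.
\]
Consider the principal antiortholattice congruences
\[
\theta_1 = \Theta(c \wedge c^{\prime}, c), \qquad \theta_2 = \Theta(c \wedge c^{\prime}, c^{\prime}).
\]
Each $\theta_i$ is non-trivial (it identifies $c \wedge c^{\prime}$ with $c$ or with $c^{\prime}$). Moreover, all four elements $c \wedge c^{\prime}, c, c^{\prime}, c \vee c^{\prime}$ are strictly positive, so the antiortholattice rule $x^{\sim} = 0 \Leftrightarrow x > 0$ does not force $(0,1) \in \theta_i$; hence $\theta_i \ne \nabla$. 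Establishing $\theta_1 \cap \theta_2 = \Delta$ will then contradict the subdirect irreducibility of $\mathbf{L}$ and finish the argument.

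To prove $\theta_1 \cap \theta_2 = \Delta$, I would describe $\theta_i$ concretely via the pairs it forces. From $(c \wedge c^{\prime}, c) \in \theta_1$, the involution gives $(c^{\prime}, c \vee c^{\prime}) \in \theta_1$; propagation through $\wedge$ and $\vee$ then yields pairs of the form $(x \wedge c, x \wedge (c \wedge c^{\prime}))$ and $(x \vee c^{\prime}, x \vee (c \vee c^{\prime}))$, and the rigid behavior of $^{\sim}$ on an antiortholattice produces no further non-trivial identifications. A symmetric analysis governs $\theta_2$, with $c$ and $c^{\prime}$ swapped. The main obstacle is to rule out any ``crossover'' identification such as $(c^{\prime}, c \wedge c^{\prime}) \in \theta_1$, which would collapse the entire bow-tie $\{c \wedge c^{\prime}, c, c^{\prime}, c \vee c^{\prime}\}$; here one must exploit the pseudo-Kleene law (Definition 2.1(2)) together with the lattice-theoretic incomparability of $c$ and $c^{\prime}$. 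Once such crossover is excluded, the pair $(c \wedge c^{\prime}, c)$ lies in $\theta_1 \setminus \theta_2$, and, more importantly, $\theta_1$ and $\theta_2$ turn out to share only diagonal pairs, delivering the required contradiction.
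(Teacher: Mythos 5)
Your part (1) is correct but takes a genuinely different route from the paper: you go through J\'onsson's Lemma plus the closure of $\mathbb{AOL}$ under $S$ and $P_U$ (it is a universal class) and the easy observation that nontrivial homomorphic images of antiortholattices are antiortholattices, whereas the paper argues that a subdirectly irreducible algebra is directly indecomposable, hence has only $0,1$ as central elements, and that in $V(\mathbb{AOL})$ central and Kleene-sharp elements coincide. Both work; yours trades the paper's appeal to the centre-equals-sharp fact for an appeal to congruence distributivity.

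Part (2), however, has a genuine gap. You set up the right contradiction (two nontrivial congruences meeting in $\Delta$), but you take $\theta_1=\Theta(c\wedge c',c)$ and $\theta_2=\Theta(c\wedge c',c')$ to be \emph{principal} congruences and then never actually prove $\theta_1\cap\theta_2=\Delta$. The passage ``propagation through $\wedge$ and $\vee$ then yields pairs of the form \dots and produces no further non-trivial identifications'' is precisely the assertion that needs proof, and it is not automatic: a principal congruence of a lattice is the transitive closure of chains of weak projectivities and can in principle identify far more than the pairs you list; excluding the ``crossover'' collapse is the whole content of the step, and you explicitly defer it (``here one must exploit the pseudo-Kleene law \dots once such crossover is excluded''). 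The paper sidesteps this by not working with principal congruences at all: it \emph{defines} $\theta_1$ as the equivalence whose only non-singleton classes are the intervals $\left[c\wedge c',c\right]$ and $\left[c',c\vee c'\right]$ (dually for $\theta_2$) and verifies directly that this explicit equivalence is a congruence --- compatibility with $'$ holds because the involution swaps the two intervals, and compatibility with $^{\sim}$ holds because $c\wedge c'>0$ in an antiortholattice (otherwise $c$ would be a nonzero Kleene-sharp element), so $^{\sim}$ is constantly $0$ on both blocks. With that concrete description, $\theta_1\cap\theta_2=\Delta$ is an immediate interval computation. To repair your argument you would have to carry out essentially this construction (which in particular shows your principal congruences are contained in these equivalences) or otherwise bound $\Theta(c\wedge c',c)$ from above; as written, the decisive step is missing.
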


\begin{proof}
(1) If $\mathbf{L}$ is s.i., then it is directly indecomposable, whence its
central elements are $0,1$. Since $\mathbf{L}$ is in $V(\mathbb{AOL})$, its
central elements coincide with its sharp elements. So $S_{K}\left(
\mathbf{L}\right)  =\left\{  0,1\right\}  $ and $\mathbf{L}$ is an antiortholattice.

(2) By (1), we can safely assume that $\mathbf{L}$ is an antiortholattice.
Suppose ex absurdo that there exists $a$ such that $a\nleq a^{\prime}$ and
$a^{\prime}\nleq a$. Let $\theta_{1}$ be the equivalence whose unique
non-singleton classes are the intervals $\left[  a\wedge a^{\prime},a\right]
$ and $\left[  a^{\prime},a\vee a^{\prime}\right]  $; likewise, let
$\theta_{2}$ be the equivalence whose unique non-singleton classes are the
intervals $\left[  a\wedge a^{\prime},a^{\prime}\right]  $ and $\left[
a,a\vee a^{\prime}\right]  $. Indeed, we have that $\theta_{1},\theta
_{2}>\Delta$: in fact, if $\left[  a\wedge a^{\prime},a\right]  $ were a
singleton, then $a\leq a^{\prime}$, against our hypothesis, and similarly for
the remaining intervals.

We now show that $\theta_{1}$ is a congruence. Disregarding trivial cases, if
$\left(  b_{1},c_{1}\right)  ,\left(  b_{2},c_{2}\right)  \in\theta_{1}$, then
we have to distinguish different situations. If $a\wedge a^{\prime}\leq
b_{1},c_{1},b_{2},c_{2}\leq a$, then clearly%
\[
a\wedge a^{\prime}\leq b_{1}\wedge b_{2},c_{1}\wedge c_{2}\leq a
\]
and thus $\left(  b_{1}\wedge b_{2},c_{1}\wedge c_{2}\right)  \in\theta_{1}$.
If $a\wedge a^{\prime}\leq b_{1},c_{1}\leq a$ and $a^{\prime}\leq b_{2}%
,c_{2}\leq a\vee a^{\prime}$, then $a\wedge a^{\prime}\leq a^{\prime}\leq
b_{2},c_{2}$, and thus%
\[
a\wedge a^{\prime}\leq b_{1}\wedge b_{2}\leq b_{1}\leq a\text{ and }a\wedge
a^{\prime}\leq c_{1}\wedge c_{2}\leq c_{1}\leq a
\]
and the same conclusion follows. The other cases are disposed of similarly,
whence $\theta_{1}$ preserves meets. It is readily seen that if $b,c\in\left[
a\wedge a^{\prime},a\right]  $, then $b^{\prime},c^{\prime}\in\left[
a^{\prime},a\vee a^{\prime}\right]  $, and vice versa. Finally, our hypothesis
that $a\nleq a^{\prime}$ and $a^{\prime}\nleq a$ implies that $0<a,a^{\prime}%
$; if it were that $a\wedge a^{\prime}=0$, then $a$ would be a nonzero sharp
element, against the fact that $\mathbf{L}$ is an antiortholattice. In sum, if
either $b,c\in$ $\left[  a\wedge a^{\prime},a\right]  $ or $b,c\in\left[
a^{\prime},a\vee a^{\prime}\right]  $, then $0<b,c$ and then $b^{\sim}%
=c^{\sim}=0$. Similarly, it can be shown that $\theta_{2}$ is a congruence.

To obtain a contradiction, it remains to be proved that $\theta_{1}\cap
\theta_{2}=\Delta$. Again, we have to split cases. If $a\wedge a^{\prime}\leq
b,c\leq a$ and $a\wedge a^{\prime}\leq b,c\leq a^{\prime}$, then $a\wedge
a^{\prime}\leq b,c\leq a\wedge a^{\prime}$, whence $b=a\wedge a^{\prime}=c$.
If $a\wedge a^{\prime}\leq b,c\leq a$ and $a\leq b,c\leq a\vee a^{\prime}$,
then%
\[
a\wedge a^{\prime}\leq b,c\leq a\leq b,c\text{,}%
\]
whereby, again, $b=a=c$. The remaining cases yield similar outcomes.
\end{proof}

We now introduce a slight variant of the twist construction already used in
\cite{GLP1+}. {Let $\mathbf{L=}\left\langle L,\wedge,\vee\right\rangle $ be a
bounded lattice with bounds $0$ and $1$ and induced order $\leq$. Take a
bijective copy $f\left[  L-\left\{  0\right\}  \right]  $ of $L-\left\{
0\right\}  $ and endow it with the order $\leq^{\partial}$dual to $\leq$; call
the resulting lattice $\mathbf{M}$. Now, consider the \emph{ordinal sum}
$\mathbf{M\oplus L}$ \cite[Ch. 4]{Harz}, which is a lattice. Let, for $a\in
M\oplus L$,%
\[
a^{\prime}=\left\{
\begin{array}
[c]{l}%
f\left(  a\right)  \text{ if }a\in L-\left\{  0\right\}  \text{;}\\
0\text{ if }a=0\text{;}\\
f^{-1}\left(  a\right)  \text{ if }a\in M\text{.}%
\end{array}
\right.
\]
The algebra }$\mathcal{T}_{1}\left(  {\mathbf{L}}\right)  ${$=\left\langle
M\oplus L,\wedge^{\mathcal{T}_{1}\left(  {\mathbf{L}}\right)  },\vee
^{\mathcal{T}_{1}\left(  {\mathbf{L}}\right)  },^{\prime\mathcal{T}_{1}\left(
{\mathbf{L}}\right)  },^{\sim\mathcal{T}_{1}\left(  {\mathbf{L}}\right)
},0^{\mathcal{T}_{1}\left(  {\mathbf{L}}\right)  },1^{\mathcal{T}_{1}\left(
{\mathbf{L}}\right)  }\right\rangle $, where:%
\[%
\begin{array}
[c]{lll}%
\wedge^{\mathcal{T}_{1}\left(  {\mathbf{L}}\right)  }=\wedge^{\mathbf{M\oplus
L}}\text{;} & \vee^{\mathcal{T}_{1}\left(  {\mathbf{L}}\right)  }%
=\vee^{\mathbf{M\oplus L}}\text{;} & 0^{\mathcal{T}_{1}\left(  {\mathbf{L}%
}\right)  }=f\left(  1\right)  \text{;}\\
^{\prime\mathcal{T}_{1}\left(  {\mathbf{L}}\right)  }=^{\prime}\text{;} &  &
1^{\mathcal{T}_{1}\left(  {\mathbf{L}}\right)  }=1
\end{array}
\]
}and{%
\[
a^{\sim\mathcal{T}_{1}\left(  {\mathbf{L}}\right)  }=\left\{
\begin{array}
[c]{l}%
1\text{ if }a=0^{\mathcal{T}_{1}\left(  {\mathbf{L}}\right)  }\text{;}\\
0^{\mathcal{T}_{1}\left(  {\mathbf{L}}\right)  }\text{ otherwise.}%
\end{array}
\right.
\]
is an antiortholattice such that }$P\left(  \mathcal{T}_{1}\left(
{\mathbf{L}}\right)  \right)  \cup N\left(  \mathcal{T}_{1}\left(
{\mathbf{L}}\right)  \right)  =M\oplus L${, and $\mathbf{L}$ is a lattice
subreduct of such --- actually, }$L=P\left(  \mathcal{T}_{1}\left(
{\mathbf{L}}\right)  \right)  ${. A similar construction can be effected for
any \emph{upper bounded} lattice $\mathbf{L}$, with the only difference that
we take a bijective copy $f\left[  L\right]  $ of the whole universe $L$ of
the lattice at issue, not just of $L-\left\{  0\right\}  $. The resulting
PBZ*-lattice, which we call }$\mathcal{T}_{2}\left(  {\mathbf{L}}\right)  $,
will be such that:

\begin{itemize}
\item if {$\mathbf{L}$ has a bottom element }$0$, then $P\left(
\mathcal{T}_{2}\left(  {\mathbf{L}}\right)  \right)  $ has $0$ as a bottom
element and $N\left(  \mathcal{T}_{2}\left(  {\mathbf{L}}\right)  \right)  $
has $f\left(  0\right)  $ as a top element;

\item if {$\mathbf{L}$ is unbounded below}, then $P\left(  \mathcal{T}%
_{2}\left(  {\mathbf{L}}\right)  \right)  $ has no bottom element and
$N\left(  \mathcal{T}_{2}\left(  {\mathbf{L}}\right)  \right)  $ has no top element.
\end{itemize}

Conversely, we have that:

\begin{theorem}
\label{gazzarra}Every nontrivial antiortholattice ${\mathbf{L}}$ such that
$P\left(  {\mathbf{L}}\right)  \cup N\left(  {\mathbf{L}}\right)  =L$ is
isomorphic to $\mathcal{T}_{i}\left(  P\left(  {\mathbf{L}}\right)  \right)
$, for some $i\in\left\{  1,2\right\}  $.
\end{theorem}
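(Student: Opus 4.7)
The plan is to split into two cases according to whether $P(\mathbf{L}) \cap N(\mathbf{L})$ is a singleton or empty, and in each case to exhibit an explicit isomorphism onto the relevant twist structure over $P(\mathbf{L})$.

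First I would establish the structural scaffolding via the pseudo-Kleene inequality of Definition \ref{de:kleene}(2). Three facts drop out immediately. (a) Any $a\in N(\mathbf{L})$ lies below any $b\in P(\mathbf{L})$, since $a\wedge a' = a$ and $b\vee b' = b$, so the Kleene condition yields $a\leq b$. (b) Two fixed points $a=a'$ and $b=b'$ must coincide: Kleene forces both $a\leq b$ and $b\leq a$. (c) If a fixed point $p_{0}$ exists, then by (a) it is simultaneously the top of $N(\mathbf{L})$ and the bottom of $P(\mathbf{L})$. Also, by antitonicity and involutivity, $'$ restricts to a bijection $P(\mathbf{L})\leftrightarrow N(\mathbf{L})$, and $P(\mathbf{L})\cap N(\mathbf{L})$ is precisely the set of fixed points of $'$, hence has cardinality at most $1$.

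In case (a), when a fixed point $p_{0}$ exists, $P(\mathbf{L})$ is a bounded lattice with bounds $p_{0}$ and $1$, so $\mathcal{T}_{1}(P(\mathbf{L}))$ is well defined. I would take $\phi\colon \mathcal{T}_{1}(P(\mathbf{L}))\to \mathbf{L}$ to be the identity on $P(\mathbf{L})$ and send $f(a)\mapsto a'$ for every $a\in P(\mathbf{L})\setminus\{p_{0}\}$; its image is $P(\mathbf{L})\cup (N(\mathbf{L})\setminus\{p_{0}\}) = L$ by hypothesis. In case (b), no fixed point exists, $P(\mathbf{L})\cap N(\mathbf{L})=\emptyset$, and $P(\mathbf{L})$ is merely upper bounded (by $1$); so $\mathcal{T}_{2}(P(\mathbf{L}))$ is the relevant construction, and I would take $\phi$ to be the identity on $P(\mathbf{L})$ together with $f(a)\mapsto a'$ for every $a\in P(\mathbf{L})$.

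What remains is to check that $\phi$ is bijective and preserves $\wedge$, $\vee$, $'$, and $^{\sim}$. Bijectivity and preservation of $'$ are immediate from fact (b)--(c) and the involutive bijection $P\leftrightarrow N$. Preservation of $^{\sim}$ is automatic, because in an antiortholattice $x^{\sim}=1$ if $x=0$ and $0$ otherwise, and $\phi$ sends $0^{\mathcal{T}_{i}}=f(1)$ to $1'=0^{\mathbf{L}}$ and $1$ to $1$. The principal obstacle is preservation of the lattice operations: one must argue that the ordinal sum $M\oplus P(\mathbf{L})$ exactly reproduces the ambient lattice of $\mathbf{L}$. Restricted to $P(\mathbf{L})$ this holds by definition; restricted to $M$ it holds because $f(a)\leq^{\partial}f(b)$ iff $b\leq a$ iff $a'\leq b'$ by antitonicity of $'$; and any cross-meet or cross-join between $f(a)\in M$ and $c\in P(\mathbf{L})$ degenerates in the ordinal sum to $f(a)$ and $c$ respectively, which matches the behaviour in $\mathbf{L}$ thanks to structural fact (a). This closes both cases.
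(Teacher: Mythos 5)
Your proposal is correct and follows essentially the same route as the paper: the case split on the existence of a fixed point of $^{\prime}$ determines whether $\mathcal{T}_{1}$ or $\mathcal{T}_{2}$ applies, and your map is exactly the inverse of the paper's isomorphism $\varphi$ (identity on $P(\mathbf{L})$, Kleene complement on the dual copy), with the same verifications. Your explicit preliminary facts (every negative element lies below every positive one, at most one fixed point) are used only implicitly in the paper's case analysis, but the argument is the same.
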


\begin{proof}
Let $i$ be $1$ or $2$ according as there is $c\in L$ such that $c=c^{\prime} $
or not. In the former case, in fact, $c$ is the bottom element in $P\left(
{\mathbf{L}}\right)  $ and the construction can be applied. Let $\varphi
:L\rightarrow\mathcal{T}_{i}\left(  P\left(  \mathbf{L}\right)  \right)  $ be
defined by:%
\[
\varphi\left(  a\right)  =\left\{
\begin{array}
[c]{l}%
a\text{ if }a\in P\left(  {\mathbf{L}}\right)  \text{;}\\
f\left(  a^{\prime{\mathbf{L}}}\right)  \text{ otherwise.}%
\end{array}
\right.
\]

This function is clearly injective, and it is surjective because $P\left(
{\mathbf{L}}\right)  \cup N\left(  {\mathbf{L}}\right)  =L$ and $f$ is a
bijection. Moreover, $\varphi\left(  0^{{\mathbf{L}}}\right)  =f\left(
1^{{\mathbf{L}}}\right)  =0^{\mathcal{T}_{i}\left(  {\mathbf{L}}\right)  }$.

Let us check that $\varphi\left(  a\wedge^{{\mathbf{L}}}b\right)
=\varphi\left(  a\right)  \wedge^{\mathcal{T}_{i}\left(  P\left(  {\mathbf{L}%
}\right)  \right)  }\varphi\left(  b\right)  $. If $a\wedge^{{\mathbf{L}}}b\in
P\left(  {\mathbf{L}}\right)  $, then $a,b\in P\left(  {\mathbf{L}}\right)  $,
and our conclusion is trivial. On the other hand, if $a\wedge^{{\mathbf{L}}%
}b\in N\left(  \mathbf{L}\right)  ^{+}$, then suppose further that $a,b\in
N\left(  \mathbf{L}\right)  ^{+}$. Then%
\begin{align*}
\varphi\left(  a\wedge^{{\mathbf{L}}}b\right)   &  =f\left(  \left(
a\wedge^{{\mathbf{L}}}b\right)  ^{\prime{\mathbf{L}}}\right) \\
&  =f\left(  a^{\prime{\mathbf{L}}}\vee^{{\mathbf{L}}}b^{\prime{\mathbf{L}}%
}\right) \\
&  =f\left(  a^{\prime{\mathbf{L}}}\right)  \wedge^{\mathcal{T}_{i}\left(
{\mathbf{L}}\right)  }f\left(  b^{\prime{\mathbf{L}}}\right) \\
&  =\varphi\left(  a\right)  \wedge^{\mathcal{T}_{i}\left(  P\left(
{\mathbf{L}}\right)  \right)  }\varphi\left(  b\right)  \text{.}%
\end{align*}

Suppose instead w.l.g. that $a\in N\left(  \mathbf{L}\right)  ^{+},b\in
P\left(  {\mathbf{L}}\right)  $. Then%
\[
\varphi\left(  a\wedge^{{\mathbf{L}}}b\right)  =\varphi\left(  a\right)
=f\left(  a^{\prime{\mathbf{L}}}\right)  =f\left(  a^{\prime{\mathbf{L}}%
}\right)  \wedge^{\mathcal{T}_{i}\left(  {\mathbf{L}}\right)  }b=\varphi
\left(  a\right)  \wedge^{\mathcal{T}_{i}\left(  P\left(  {\mathbf{L}}\right)
\right)  }\varphi\left(  b\right)  \text{.}%
\]

Next, we verify that $\varphi\left(  a^{\prime{\mathbf{L}}}\right)
=\varphi\left(  a\right)  ^{\prime\mathcal{T}_{i}\left(  {\mathbf{L}}\right)
}$. If $a\in N\left(  \mathbf{L}\right)  ^{+}$, then $a^{\prime{\mathbf{L}}%
}\in P\left(  {\mathbf{L}}\right)  $ and so $\varphi\left(  a^{\prime
{\mathbf{L}}}\right)  =a^{\prime{\mathbf{L}}}=f^{-1}\left(  f\left(
a^{\prime{\mathbf{L}}}\right)  \right)  =f^{-1}\left(  \varphi\left(
a\right)  \right)  =\varphi\left(  a\right)  ^{\prime\mathcal{T}_{i}\left(
{\mathbf{L}}\right)  }$. If $a\in P\left(  \mathbf{L}\right)  ^{+}$, then
$a^{\prime{\mathbf{L}}}\in N\left(  \mathbf{L}\right)  ^{+}$ and so
$\varphi\left(  a^{\prime{\mathbf{L}}}\right)  =f\left(  a\right)  =f\left(
\varphi\left(  a\right)  \right)  =\varphi\left(  a\right)  ^{\prime
\mathcal{T}_{i}\left(  {\mathbf{L}}\right)  }$. If $a=c=c^{\prime}$, then
$\varphi\left(  a^{\prime{\mathbf{L}}}\right)  =\varphi\left(  c\right)
=c=c^{\prime\mathcal{T}_{1}\left(  {\mathbf{L}}\right)  }=\varphi\left(
a\right)  ^{\prime\mathcal{T}_{1}\left(  {\mathbf{L}}\right)  }$.

Finally, we check that $\varphi\left(  a^{\sim{\mathbf{L}}}\right)
=\varphi\left(  a\right)  ^{\sim\mathcal{T}_{i}\left(  {\mathbf{L}}\right)  }%
$. If $a=0^{{\mathbf{L}}}$, then $\varphi\left(  a^{\sim{\mathbf{L}}}\right)
=1^{{\mathbf{L}}}=f\left(  1\right)  ^{\sim\mathcal{T}_{i}\left(  {\mathbf{L}%
}\right)  }=\varphi\left(  a\right)  ^{\sim\mathcal{T}_{i}\left(  {\mathbf{L}%
}\right)  }$. If $a>0^{{\mathbf{L}}}$, then $\varphi\left(  a^{\sim
{\mathbf{L}}}\right)  =\varphi\left(  0^{{\mathbf{L}}}\right)  =f\left(
1\right)  $. Moreover, $f\left(  1\right)  =a^{\sim\mathcal{T}_{i}\left(
{\mathbf{L}}\right)  }=\varphi\left(  a\right)  ^{\sim\mathcal{T}_{i}\left(
{\mathbf{L}}\right)  }$ if $a\in P\left(  {\mathbf{L}}\right)  $ and, since
$a^{\prime}<1^{{\mathbf{L}}}$, $f\left(  1\right)  =\left(  f\left(
a^{\prime{\mathbf{L}}}\right)  \right)  ^{\sim\mathcal{T}_{i}\left(
{\mathbf{L}}\right)  }=\varphi\left(  a\right)  ^{\sim\mathcal{T}_{i}\left(
{\mathbf{L}}\right)  }$ if $a\in N\left(  \mathbf{L}\right)  ^{+}$.
\end{proof}

From Lemma \ref{gustoso} and Theorem \ref{gazzarra} the following
representation of subdirectly irreducible members of $V(\mathbb{AOL})$
immediately follows:

\begin{corollary}
\label{dentone}If $\mathbf{L}\in V(\mathbb{AOL})$ is a subdirectly irreducible
algebra, then ${\mathbf{L}}$ is isomorphic to $\mathcal{T}_{i}\left(  P\left(
{\mathbf{L}}\right)  \right)  $, for some $i\in\left\{  1,2\right\}  $.
\end{corollary}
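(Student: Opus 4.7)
The plan is to observe that this corollary is essentially an immediate composition of the two preceding results, so the proof requires only a brief argument tying them together. First I would note that any subdirectly irreducible algebra is by convention nontrivial (it must possess a monolith), so we are entitled to apply Theorem \ref{gazzarra} once its hypotheses have been verified.

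Next I would invoke Lemma \ref{gustoso}: since $\mathbf{L} \in V(\mathbb{AOL})$ is subdirectly irreducible, part (1) of that lemma gives that $\mathbf{L}$ is an antiortholattice, and part (2) gives that $P(\mathbf{L}) \cup N(\mathbf{L}) = L$. These are precisely the hypotheses of Theorem \ref{gazzarra}.

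Finally, applying Theorem \ref{gazzarra} directly to $\mathbf{L}$ yields an isomorphism between $\mathbf{L}$ and $\mathcal{T}_i(P(\mathbf{L}))$ for some $i \in \{1,2\}$, which is the desired conclusion.

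There is no real obstacle here — the nontrivial work has already been done in establishing the two results being cited. The only thing one might pause over is the nontriviality clause of Theorem \ref{gazzarra}, but this is immediate from the standing convention that subdirectly irreducible algebras are assumed nontrivial. Consequently the proof can be dispatched in two or three lines.
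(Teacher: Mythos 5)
Your proposal is correct and matches the paper exactly: the paper states that the corollary follows immediately from Lemma \ref{gustoso} and Theorem \ref{gazzarra}, which is precisely the composition you describe. Your remark on the nontriviality of subdirectly irreducible algebras is a reasonable (and harmless) extra precaution.
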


\subsection{The distributive subvariety}

In this very short subsection we make a note of a small observation about the
distributive subvariety $\mathbb{V}_{1}$ of $V(\mathbb{AOL})$, parlaying
Corollary \ref{dentone} above into a characterisation of a notable set of
generators for $\mathbb{V}_{1}$. The intrinsic interest of this distributive
subvariety is further reinforced by recalling that the $^{\sim}$-free reducts
of its members are Kleene lattices, whence they belong to the variety of
pseudo-Kleene algebras generated by the $^{\sim}$-free reduct of ${\mathbf{D}%
}_{3}$.

Hereafter, by $\mathbf{2}$ we denote the 2-element chain, considered as a lattice.

\begin{theorem}
$\mathbb{V}_{1}$ is generated by the class $\left\{  \mathcal{T}_{i}\left(
\mathbf{2}^{\kappa}\right)  :i\in\left\{  1,2\right\}  ,\kappa\text{ a
cardinal}\right\}  $.
\end{theorem}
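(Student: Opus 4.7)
The plan is to establish both containments. The easier direction is that each $\mathcal{T}_i(\mathbf{2}^\kappa)$ lies in $\mathbb{V}_1$: these algebras are antiortholattices by construction, hence in $V(\mathbb{AOL})$, and their lattice reducts are ordinal sums built on top of $\mathbf{2}^\kappa$. Since $\mathbf{2}^\kappa$ is distributive and the ordinal sum of distributive lattices remains distributive, the identity (DIST) holds, so $\mathcal{T}_i(\mathbf{2}^\kappa) \in \mathbb{V}_1$.

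For the converse inclusion, let $\mathcal{K}$ denote the class on the right-hand side. It suffices to show that every subdirectly irreducible $\mathbf{L} \in \mathbb{V}_1$ embeds into some member of $\mathcal{K}$, since a variety is generated by its subdirectly irreducible members. Such an $\mathbf{L}$ is also subdirectly irreducible in $V(\mathbb{AOL})$ (the property depends only on the congruence lattice of the algebra), so by Corollary \ref{dentone} we have $\mathbf{L} \cong \mathcal{T}_i(P(\mathbf{L}))$ for some $i \in \{1,2\}$. The lattice $P(\mathbf{L})$ is distributive (as a lattice subreduct of the distributive $\mathbf{L}$), upper-bounded by $1^{\mathbf{L}}$, and additionally lower-bounded when $i = 1$ (by the fixed-point $c = c'$). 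Birkhoff's subdirect representation then yields a lattice homomorphism $\varphi: P(\mathbf{L}) \hookrightarrow \mathbf{2}^\kappa$ preserving all existing bounds.

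The crux is to lift $\varphi$ to an embedding $\bar\varphi: \mathcal{T}_i(P(\mathbf{L})) \hookrightarrow \mathcal{T}_i(\mathbf{2}^\kappa)$ in the PBZ*-lattice language, via the obvious rules $\bar\varphi(a) = \varphi(a)$ for $a$ in the positive part and $\bar\varphi(f(a)) = f(\varphi(a))$ for elements of the dualised component. The main obstacle is a careful case analysis confirming that $\bar\varphi$ commutes with every PBZ*-lattice operation --- particularly meets and joins involving elements drawn from the two different parts of the ordinal sum, the involution $'$, and the Brouwer complement $^\sim$. The key observation is that within an ordinal sum $\mathbf{M} \oplus \mathbf{L}$ the value of any such mixed operation is determined only by which components the arguments lie in together with intra-component lattice operations, all of which are preserved by $\varphi$. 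Injectivity of $\bar\varphi$ then follows from that of $\varphi$ and of the bijection $f$, giving $\mathbf{L} \in \mathrm{IS}(\mathcal{K}) \subseteq V(\mathcal{K})$ and hence $\mathbb{V}_1 = V(\mathcal{K})$.
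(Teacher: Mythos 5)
Your proposal is correct and follows essentially the same route as the paper: the paper reduces the subdirectly irreducible $\mathbf{L}\cong\mathcal{T}_i(P(\mathbf{L}))$ to the generators by noting that distributive lattices form $ISP(\mathbf{2})$ and that the operators $\mathcal{T}_i$ commute with $I$ and $S$, which is exactly your concrete step of embedding $P(\mathbf{L})$ into $\mathbf{2}^\kappa$ via Birkhoff--Stone and lifting that embedding to the twist structures. The only point worth making explicit is the one you already flag, namely that the representation of $P(\mathbf{L})$ must preserve the existing bounds (in particular the fixpoint/bottom when $i=1$), which the prime-filter embedding does.
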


\begin{proof}
Let $\mathbf{L}\in\mathbb{V}_{1}$ be a subdirectly irreducible algebra. By
Corollary \ref{dentone}, ${\mathbf{L}}$ is isomorphic to $\mathcal{T}%
_{i}\left(  {\mathbf{M}}\right)  $, for some $i\in\left\{  1,2\right\}  $ and
for some \emph{distributive} lattice ${\mathbf{M}}$ (since the twist product
construction is easily seen to preserve distributivity). Since the class of
distributive lattices is generated as a quasivariety by the single finite
algebra $\mathbf{2}$, it follows that ${\mathbf{L}}\in I\mathcal{T}_{i}\left(
ISP\left(  {\mathbf{2}}\right)  \right)  $. However, it is readily observed
that the mappings $\mathcal{T}_{i}$, viewed as operators that map classes of
lattices to classes of antiortholattices, commute with the class operators $I$
and $S$ (in their respective signatures). Therefore ${\mathbf{L}}\in
IS\mathcal{T}_{i}\left(  P\left(  {\mathbf{2}}\right)  \right)  $ and thus
$\mathbb{V}_{1}si\subseteq IS\mathcal{T}_{i}\left(  P\left(  {\mathbf{2}%
}\right)  \right)  $, whence
\[
\mathbb{V}_{1}=V\left(  \mathbb{V}_{1}si\right)  \subseteq HSPIS\mathcal{T}%
_{i}\left(  P\left(  {\mathbf{2}}\right)  \right)  \subseteq V\left(
\mathcal{T}_{i}\left(  P\left(  {\mathbf{2}}\right)  \right)  \right)
=V\left(  \left\{  \mathcal{T}_{i}\left(  \mathbf{2}^{\kappa}\right)
:i\in\left\{  1,2\right\}  ,\kappa\text{ a cardinal}\right\}  \right)  .
\]

Conversely, it is clear that every member of $\left\{  \mathcal{T}_{i}\left(
\mathbf{2}^{\kappa}\right)  :i\in\left\{  1,2\right\}  ,\kappa\text{ a
cardinal}\right\}  $ is a distributive antiortholattice.
\end{proof}

\subsection{The distributive subvariety with the strong De Morgan property}

The main result of this subsection is a theorem to the effect that
$\mathbb{V}_{3}$, the distributive subvariety of $V(\mathbb{AOL})$ with the
strong De Morgan property, is generated by the single $5$-element chain
${\mathbf{D}}_{5}$. We will achieve this result via a variant of the
well-known technique deployed by Kalman in \cite{Kalman} in order to show that
the $3$-element Kleene chain generates the variety of Kleene lattices.

For a start, we show that:

\begin{lemma}
\label{katanga}Every PBZ* chain is in $\mathbb{V}_{3}$.
\end{lemma}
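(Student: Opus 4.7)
The plan is to show that each of the three defining axioms of $\mathbb{V}_3$ (those of $V(\mathbb{AOL})$, plus (DIST) and (SDM)) holds in every PBZ* chain. Since every PBZ* chain is automatically a member of $V(\mathbb{AOL})$ — indeed, in a chain the only Kleene-sharp elements must be $0$ and $1$ (if $0 < a < 1$, then $a$ and $a'$ are comparable, so $a \wedge a' \in \{a, a'\}$, which forces $a = a'$ only when the chain has a fixed point of $'$, and in all cases one checks $a \wedge a' > 0$) — the work reduces to verifying (DIST) and (SDM).

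First I would dispense with (DIST) by the standard observation that every totally ordered lattice is distributive: given $a,b,c$ in the chain, sort them by the linear order and compute both sides directly.

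For (SDM), the crucial ingredient is that in any BZ-lattice the operation $^\sim$ is antitone (clause (iii) of Definition \ref{de:bazar}), and that the inequality $a^\sim \vee b^\sim \leq (a \wedge b)^\sim$ already holds universally by Lemma \ref{basics}(iv). To obtain the reverse inequality in a chain, take arbitrary $a,b$ and assume without loss of generality that $a \leq b$. Then $a \wedge b = a$, so $(a\wedge b)^\sim = a^\sim$. On the other hand, $a \leq b$ yields $b^\sim \leq a^\sim$ by antitony, hence $a^\sim \vee b^\sim = a^\sim$. Combining the two gives $(a \wedge b)^\sim = a^\sim \vee b^\sim$.

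No step here is genuinely difficult; the only thing to be careful about is the initial claim that a PBZ* chain belongs to $V(\mathbb{AOL})$, which I would justify by verifying the equations (AOL1)–(AOL3) of Theorem \ref{caniggia}(1) directly in the chain using the fact that each of them involves terms built from $^\sim$ and $\Diamond$ applied to individual variables (so that each subterm can be evaluated as $0$ or $1$ according to whether its argument is $0$ or not), and then reducing the resulting equations, in each case distinction on whether the variables are $0$, to trivialities of lattice arithmetic. This case analysis, routine but slightly tedious, is the main — and only — potential obstacle.
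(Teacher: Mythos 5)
Your proposal is correct and follows essentially the same route as the paper: show that a PBZ* chain is an antiortholattice (hence lies in $\mathbb{AOL}\subseteq V(\mathbb{AOL})$), note that chains are distributive, and verify SDM. The paper obtains the antiortholattice claim even more directly from $a\wedge a^{\sim}=0$ (so in a chain $a=0$ or $a^{\sim}=0$) and proves SDM by splitting on whether $a\wedge b=0$, whereas you use antitonicity of $^{\sim}$; both work, and your closing suggestion to verify (AOL1)--(AOL3) is superfluous once the antiortholattice property is established.
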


\begin{proof}
It will suffice to prove that every PBZ* chain $\mathbf{L}$ is an
antiortholattice and satisfies SDM. As to the former claim, suppose $a\in L$.
Either $0=a\wedge a^{\sim}=a$ or $0=a\wedge a^{\sim}=a^{\sim}$, which proves
our conclusion. For SDM, $\left(  a\wedge0\right)  ^{\sim}=1=a^{\sim}%
\vee1=a^{\sim}\vee0^{\sim}$ for all $a\in A$, so w.l.g. let $a,b\in L$ be such
that $a\wedge b>0$. Since $L$ is a linearly ordered antiortholattice, $\left(
a\wedge b\right)  ^{\sim}=0=a^{\sim}\vee b^{\sim} $.
\end{proof}

We now want to find a suitable set of generators for $\mathbb{V}_{3}$. Let
$\mathbf{L}\in\mathbb{V}_{3}$, and let, for any $p\in L$,
\[
C\left(  p\right)  =\left\{  \left(  x,y\right)  :%
\begin{array}
[c]{c}%
x\wedge p=y\wedge p,x^{\prime}\wedge p=y^{\prime}\wedge p,x^{\sim}\wedge
p=y^{\sim}\wedge p,\\
\square x\wedge p=\square y\wedge p,\Diamond x\wedge p=\Diamond y\wedge
p,\Diamond x^{\prime}\wedge p=\Diamond y^{\prime}\wedge p
\end{array}
\right\}  \text{.}%
\]

Although this definition looks complicated, observe that if $\mathbf{L}%
\in\mathbb{V}_{3}$ is subdirectly irreducible (hence an antiortholattice by
Lemma \ref{gustoso}), all of the conditions it contains except for the first
two ones trivialise for all pairs $\left(  x,y\right)  $ s.t. $0<x,y<1$.

\begin{lemma}
\label{ammarolla}Let $\mathbf{L}\in\mathbb{V}_{3}$ and let $p,q\in L$. Then:
(i) $C\left(  p\right)  $ is a congruence on $\mathbf{L}$; (ii) $C\left(
p\right)  \cap C\left(  q\right)  =C\left(  p\vee q\right)  $; (iii) $C\left(
p\right)  \cap C\left(  p^{\prime}\right)  =\Delta$; (iv) $C\left(  p\right)
=\Delta$ iff $p\in P\left(  {\mathbf{L}}\right)  $.
\end{lemma}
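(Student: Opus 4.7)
The plan is to handle the four items in order, leaning on a common observation about the six ``modalities'' $t\in T:=\{\mathrm{id},\,',\,^\sim,\,\square,\,\Diamond,\,\Diamond\circ{}'\}$ that appear in the definition of $C(p)$. Using the BZ-axiom $x^{\sim\prime}=x^{\sim\sim}$ and Lemma~\ref{basics}(i) one checks that, for every $t\in T$, both $t(x)'$ and $t(x)^\sim$ are again of the form $s(x)$ for some $s\in T$ (for instance $(x^\sim)'=\Diamond x$, $\square(x')=x^\sim$, $(\Diamond x')^\sim=\square x$). Hence the six defining equations of $C(p)$ are merely permuted when one passes from $(x,y)$ to $(x',y')$ or to $(x^\sim,y^\sim)$, which gives closure of $C(p)$ under the two unary operations at once. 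This reduces claim (i) to checking closure under $\wedge$ and $\vee$, and here each $t\in T$ satisfies a homomorphism-type identity converting meets into joins or vice versa: $'$ by De~Morgan, $^\sim$ by SDM combined with Lemma~\ref{basics}(iii), and $\square,\Diamond$ by Lemma~\ref{basics}(vi)--(vii) together with the SDM-derived duals $\square(a\vee b)=\square a\vee\square b$ and $\Diamond(a\wedge b)=\Diamond a\wedge\Diamond b$. Distributivity of $\mathbf{L}$ then lets meet-with-$p$ commute with these joins, so each of the six conditions is preserved.

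For (ii), the inclusion $C(p\vee q)\subseteq C(p)\cap C(q)$ follows from absorption applied to each $t\in T$: $t(x)\wedge p=(t(x)\wedge(p\vee q))\wedge p$, and symmetrically for $q$. The reverse inclusion is direct from distributivity, since $t(x)\wedge(p\vee q)=(t(x)\wedge p)\vee(t(x)\wedge q)$ immediately combines the six equations for $p$ with those for $q$.

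For (iv) and (iii), I would first dispatch the easy direction of (iv). Assume $p\in P(\mathbf{L})$, so $p'\le p$, and let $(x,y)\in C(p)$. Meeting the equation $x'\wedge p=y'\wedge p$ with $p'$ and using $p'\le p$ yields $x'\wedge p'=y'\wedge p'$; De~Morgan then rewrites this as $(x\vee p)'=(y\vee p)'$, hence $x\vee p=y\vee p$. Combined with $x\wedge p=y\wedge p$, distributivity forces $x=y$. Claim (iii) is then a corollary of (ii) and this half of (iv): $C(p)\cap C(p')=C(p\vee p')$, and since $(p\vee p')'=p\wedge p'\le p\vee p'$ we have $p\vee p'\in P(\mathbf{L})$, so the intersection collapses to $\Delta$.

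The only genuine computation is the converse direction of (iv). If $p\notin P(\mathbf{L})$, I would exhibit the witness $(x,y):=(p\vee p',\,p)$, which is non-diagonal precisely by hypothesis, and verify each of the six defining conditions of $C(p)$. The first two collapse by absorption and De~Morgan; the $^\sim$-condition reduces to $0=0$ via $p\wedge p^\sim=0$; the $\square$-condition uses $\square(p\vee p')=\square p\vee\square p'$ together with the identity $\square p'=p^\sim$ (which is the BZ-axiom $x^{\sim\prime}=x^{\sim\sim}$ in disguise) to reduce to $\square p\wedge p$ on both sides; and the two $\Diamond$-conditions reduce similarly, exploiting $p\le\Diamond p$ so that $\Diamond p'\wedge p\le\Diamond p\wedge p$, together with the SDM-derived $\Diamond(a\wedge b)=\Diamond a\wedge\Diamond b$ for the $\Diamond\circ{}'$ entry. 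The main obstacle I anticipate is purely organizational: confirming that each of these six verifications uses only the identities of Lemma~\ref{basics}, SDM, distributivity, and $a\wedge a^\sim=0$, so that the witness works in an arbitrary $\mathbf{L}\in\mathbb{V}_3$ rather than only in subdirectly irreducible antiortholattices.
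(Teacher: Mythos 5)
Your proof is correct and follows essentially the same route as the paper's: distributive cancellation ($x\wedge p=y\wedge p$ and $x\vee p=y\vee p$ imply $x=y$, via De Morgan applied to the second condition of $C(p)$) underlies (iii) and the easy half of (iv), while the hard half of (iv) rests on the witness pair $(p\vee p',p)$, which is just the Kleene complement of the paper's witness $(p\wedge p',p')$. The only organizational differences are that you obtain (iii) as a corollary of (ii) and the easy half of (iv) rather than directly, and that you spell out the closure computations for (i)--(ii) that the paper leaves to the reader.
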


\begin{proof}
(i), (ii): left to the reader.

(iii). Suppose $\left(  x,y\right)  \in C\left(  p\right)  \cap C\left(
p^{\prime}\right)  $. Then in particular $x\wedge p=y\wedge p$ and $x^{\prime
}\wedge p^{\prime}=y^{\prime}\wedge p^{\prime}$, whence $x\vee p=y\vee p$. By
\cite[L. 1.(ii), p. 28]{Birkhoff}, $x=y$.

(iv) By (iii), it suffices to prove that $C\left(  p\right)  \subseteq
C\left(  p^{\prime}\right)  $ iff $p^{\prime}\leq p$. From left to right, it
can be checked that $\left(  p\wedge p^{\prime},p^{\prime}\right)  \in
C\left(  p\right)  \subseteq C\left(  p^{\prime}\right)  $, whence $p\wedge
p^{\prime}=p^{\prime}$. Conversely, suppose $p^{\prime}\leq p$ and $\left(
x,y\right)  \in C\left(  p\right)  $. Since $x\wedge p=y\wedge p$, we have
that%
\[
x\wedge p^{\prime}=x\wedge p^{\prime}\wedge p=y\wedge p^{\prime}\wedge
p=y\wedge p^{\prime}\text{,}%
\]
and similarly for the other conditions.
\end{proof}

\begin{lemma}
\label{intzaras}Let $\mathbf{L}\in\mathbb{V}_{3}$ be a subdirectly irreducible
algebra, and let $a,b\in L$. Then: (i) $P\left(  {\mathbf{L}}\right)  ^{+}$ is
closed under meets; (ii) if $a\in P\left(  {\mathbf{L}}\right)  ^{+},b\in
N\left(  {\mathbf{L}}\right)  $, then $b<a$.
\end{lemma}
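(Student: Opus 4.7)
My plan is to prove (ii) first and then use it in the proof of (i), since (i) depends on (ii). For (ii), I would observe that subdirect irreducibility is not actually needed: the pseudo-Kleene inequality $x \wedge x' \leq y \vee y'$ already does the work. Indeed, if $a \in P(\mathbf{L})^{+}$ then $a' < a$, so $a \wedge a' = a'$; and if $b \in N(\mathbf{L})$ then $b \leq b'$, so $b \vee b' = b'$. Substituting gives $a' \leq b'$, and involution yields $b \leq a$. Strictness follows because $P(\mathbf{L})^{+} \cap N(\mathbf{L}) = \emptyset$: any common element would satisfy $a' < a \leq a'$, absurd.

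For (i), let $a,b \in P(\mathbf{L})^{+}$. My first subgoal is $a \wedge b > 0$. By Lemma \ref{gustoso}, $\mathbf{L}$ is an antiortholattice, so $a^{\sim} = b^{\sim} = 0$; then SDM gives $(a \wedge b)^{\sim} = a^{\sim} \vee b^{\sim} = 0$, which in an antiortholattice means $a \wedge b > 0$. Next, since $a' \in N(\mathbf{L})$ (as $a' < (a')' = a$) and $b \in P(\mathbf{L})^{+}$, part (ii) gives $a' < b$; symmetrically $b' < a$. Hence $a' \vee b' \leq a \wedge b$, i.e., $(a \wedge b)' \leq a \wedge b$, so $a \wedge b \in P(\mathbf{L})$.

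The main obstacle is to exclude the possibility that $a \wedge b$ is a fixed point of $'$. Suppose for contradiction $(a \wedge b)' = a \wedge b$ and set $p = a' \vee b' = a \wedge b$. Then $p = p'$, so $p \in P(\mathbf{L})$, and Lemma \ref{ammarolla}(iv) yields $C(p) = \Delta$. On the other hand, $a' \notin P(\mathbf{L})$ (since $(a')' = a > a'$) and similarly $b' \notin P(\mathbf{L})$, so by Lemma \ref{ammarolla}(iv) both $C(a')$ and $C(b')$ are nontrivial. But Lemma \ref{ammarolla}(ii) forces $C(a') \cap C(b') = C(a' \vee b') = C(p) = \Delta$. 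This contradicts the subdirect irreducibility of $\mathbf{L}$, whose monolith would have to lie inside each of $C(a')$ and $C(b')$. We conclude $a \wedge b \in P(\mathbf{L})^{+}$.

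The conceptual crux is recognising that, although $a'$ and $b'$ individually fall outside $P(\mathbf{L})$, their join $a' \vee b'$ slips back into $P(\mathbf{L})$ exactly in the degenerate case we wish to exclude; it is precisely this phenomenon that triggers the congruence collision and allows subdirect irreducibility to do its job.
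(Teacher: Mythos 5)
Your proof is correct. For part (i) it follows essentially the same route as the paper: the crux in both cases is that $C(a'\vee b')=C(a')\cap C(b')$ collapses to $\Delta$ while neither factor does, contradicting subdirect irreducibility via Lemma \ref{ammarolla}(ii) and (iv); the only cosmetic difference is that the paper gets $a\wedge b\leq a'\vee b'$ in the bad case from the comparability guaranteed by Lemma \ref{gustoso}(ii), whereas you first establish $a'\vee b'\leq a\wedge b$ and only have to exclude equality. (Your preliminary step showing $a\wedge b>0$ via SDM is correct but never used afterwards.) Where you genuinely diverge is part (ii), and there your argument is better: the paper proves (i) first and then derives (ii) from (i) together with distributivity and the comparability of $a\wedge b'$ with $a'\vee b$, via a somewhat delicate computation; you instead observe that the pseudo-Kleene inequality alone gives $a'=a\wedge a'\leq b\vee b'=b'$, hence $b\leq a$, with strictness from $P(\mathbf{L})^{+}\cap N(\mathbf{L})=\emptyset$. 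This is shorter, needs neither subdirect irreducibility nor membership in $\mathbb{V}_{3}$, and holds in any pseudo-Kleene algebra; it also lets you reverse the dependency and feed (ii) into (i), which streamlines the whole lemma.
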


\begin{proof}
(i) Suppose $a^{\prime}<a,b^{\prime}<b$, but $a^{\prime}\vee b^{\prime}\nless
a\wedge b$. By Lemma \ref{gustoso}.(ii), $a\wedge b\leq a^{\prime}\vee
b^{\prime}$, and thus by Lemma \ref{ammarolla}.(ii)-(iv) $\Delta=C\left(
a^{\prime}\vee b^{\prime}\right)  =C\left(  a^{\prime}\right)  \cap C\left(
b^{\prime}\right)  $. Thus, $\Delta=C\left(  a^{\prime}\right)  $ or
$\Delta=C\left(  b^{\prime}\right)  $, whence using Lemma \ref{ammarolla}.(iv)
again, either $a\leq a^{\prime}$ or $b\leq b^{\prime}$, a contradiction.

(ii) Suppose $a^{\prime}<a,b\leq b^{\prime}$. By Lemma \ref{gustoso}.(ii),
either $a\wedge b^{\prime}<a^{\prime}\vee b$ or $a^{\prime}\vee b\leq a\wedge
b^{\prime}$. If $a\wedge b^{\prime}<a^{\prime}\vee b$, then by the previous
item%
\[
a\wedge\left(  a^{\prime}\vee b\right)  \leq a\wedge\left(  a^{\prime}\vee
b^{\prime}\right)  =a^{\prime}\vee\left(  a\wedge b^{\prime}\right)
<a\wedge\left(  a^{\prime}\vee b\right)  \text{,}%
\]

a contradiction. So $a^{\prime}\vee b\leq a\wedge b^{\prime}$, whence $b\leq
a^{\prime}\vee b\leq a\wedge b^{\prime}\leq a$. But $a\neq b$, since otherwise
$b\leq b^{\prime}<b$. Our conclusion follows.
\end{proof}

Let $\mathbf{L}\in\mathbb{V}_{3}$ be a subdirectly irreducible algebra. Let
$\sim$ be the equivalence whose cosets are $\left\{  0\right\}  ,\left\{
1\right\}  ,N\left(  \mathbf{L}\right)  ^{+},P\left(  \mathbf{L}\right)  ^{+}$
and the singleton of the fixpoint $c$ (if present). Let moreover, for $p\in L
$:%
\begin{align*}
D\left(  p\right)   &  =\left\{  \left(  x,y\right)  :x\sim y\text{ and
}\left(  x\vee x^{\prime}\right)  \wedge p=\left(  y\vee y^{\prime}\right)
\wedge p\right\}  \text{;}\\
E\left(  p\right)   &  =\left\{  \left(  x,y\right)  :x\sim y\text{ and
}\left(  x\vee x^{\prime}\right)  \vee p=\left(  y\vee y^{\prime}\right)  \vee
p\right\}  \text{.}%
\end{align*}

\begin{proposition}
\label{tomtom}(i) $\sim$ is a congruence; (ii) for all $p\in L$, $D\left(
p\right)  $ is a congruence; (iii) for all $p\in L$, $E\left(  p\right)  $ is
a congruence; (iv) for all $p\in L$, either $D\left(  p\right)  =\Delta$ or
$E\left(  p\right)  =\Delta$.
\end{proposition}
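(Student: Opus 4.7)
My plan is to exploit the fact that, by Lemma \ref{gustoso}, $\mathbf{L}$ is an antiortholattice with $L = P(\mathbf{L}) \cup N(\mathbf{L})$. An antiortholattice admits no fixpoint of the involution (such a fixpoint would be Kleene-sharp, hence in $\{0,1\}$, which is impossible), so we get the disjoint partition $L = N(\mathbf{L})^+ \sqcup P(\mathbf{L})^+$ with $0 \in N(\mathbf{L})^+$ and $1 \in P(\mathbf{L})^+$; hence the cosets of $\sim$ are $\{0\}$, $N(\mathbf{L})^+\setminus\{0\}$, $P(\mathbf{L})^+\setminus\{1\}$, $\{1\}$. A useful simplification is that the function $\alpha(x) := x \vee x'$ equals $x$ on $P(\mathbf{L})$, equals $x'$ on $N(\mathbf{L})$, and satisfies $\alpha(0) = \alpha(1) = 1$; consequently $\alpha(x') = \alpha(x)$, $\alpha(x^\sim) = 1$, and $\alpha$ is injective on every coset of $\sim$. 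This lets me recast $D(p)$ as $\{(x,y) : x \sim y,\ \alpha(x) \wedge p = \alpha(y) \wedge p\}$, and $E(p)$ analogously.

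For (i), I would verify the congruence property of $\sim$ by case analysis on the cosets of the arguments. The involution $'$ swaps $\{0\}\leftrightarrow\{1\}$ and $N(\mathbf{L})^+\setminus\{0\} \leftrightarrow P(\mathbf{L})^+\setminus\{1\}$, while $^\sim$ collapses every nonzero coset to $\{0\}$ and sends $0$ to $\{1\}$. For $\wedge$: two elements of $N(\mathbf{L})^+\setminus\{0\}$ meet inside $N(\mathbf{L})$, and the meet is nonzero thanks to the zero-divisor-freeness forced by SDM on an antiortholattice (from $(x\wedge u)^\sim = x^\sim \vee u^\sim$ one gets $x \wedge u = 0 \Rightarrow x = 0$ or $u = 0$); two elements of $P(\mathbf{L})^+\setminus\{1\}$ meet inside $P(\mathbf{L})^+$ by Lemma \ref{intzaras}.(i); mixed pairs collapse via Lemma \ref{intzaras}.(ii), which yields $x\wedge u = x$ whenever $x \in N(\mathbf{L})$ and $u \in P(\mathbf{L})^+$; cases involving $0$ or $1$ are trivial. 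The argument for $\vee$ is dual.

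For (ii) and (iii), the $\sim$-component of $D(p)$ and $E(p)$ is preserved by (i), and closure under $'$ and $^\sim$ is immediate from $\alpha(x') = \alpha(x)$ and $\alpha(x^\sim) = 1$. Closure under $\wedge$ again splits by coset: when $x, u \in N(\mathbf{L})$, the involutive De Morgan law gives $\alpha(x\wedge u) = x'\vee u' = \alpha(x)\vee\alpha(u)$, and distributivity turns $\alpha(x\wedge u)\wedge p$ into a join of factors of the form $\alpha(\cdot)\wedge p$, where substitution using the hypotheses is immediate; when $x, u \in P(\mathbf{L})$, $\alpha(x\wedge u) = \alpha(x)\wedge\alpha(u)$ and the required equality follows by absorbing $p$ stepwise; the mixed case is handled by Lemma \ref{intzaras}.(ii). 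Closure under $\vee$, and the corresponding arguments for $E(p)$, are dual.

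For (iv) --- the conceptual core of the proposition --- I would first establish that $D(p)\cap E(p) = \Delta$. If $(x,y)$ lies in both, then $\alpha(x)\wedge p = \alpha(y)\wedge p$ and $\alpha(x)\vee p = \alpha(y)\vee p$ in the distributive lattice $\mathbf{L}$, so by \cite[L.~1.(ii), p.~28]{Birkhoff} $\alpha(x) = \alpha(y)$; combined with $x \sim y$ and the injectivity of $\alpha$ on each coset, this forces $x = y$. Subdirect irreducibility of $\mathbf{L}$ then finishes the argument: its congruence lattice has a monolith $\mu > \Delta$, and if neither $D(p)$ nor $E(p)$ were trivial they would both contain $\mu$, contradicting $D(p)\cap E(p) = \Delta$. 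The main obstacle I anticipate is the clerical bookkeeping through coset configurations in (i)--(iii); once that case analysis is in place, (iv) follows in a few lines.
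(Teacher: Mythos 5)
Your proposal follows the same route as the paper's proof (case analysis on the cosets of $\sim$ for (i)--(iii); Birkhoff's cancellation lemma plus injectivity of $x\mapsto x\vee x^{\prime}$ on each coset plus the monolith for (iv)), but it rests on one genuinely false claim made at the outset: that an antiortholattice admits no fixpoint of the involution. Your justification is a miscalculation: if $c=c^{\prime}$ then $c\wedge c^{\prime}=c$, not $0$, so a fixpoint is Kleene-\emph{unsharp} and is perfectly compatible with $S_{K}(L)=\{0,1\}$. Indeed $\mathbf{D}_{3}$ and $\mathbf{D}_{5}$ --- the very algebras Theorem \ref{merluzzo} identifies as subdirectly irreducible members of $\mathbb{V}_{3}$ --- each contain a fixpoint. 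This is precisely why the definition of $\sim$ in the paper lists ``the singleton of the fixpoint $c$ (if present)'' as a fifth coset. As written, your partition $L=N(\mathbf{L})^{+}\sqcup P(\mathbf{L})^{+}$ fails to cover $c$, your relation $\sim$ is not the one in the statement, and the case analyses in (i)--(iii) omit every configuration involving the fixpoint; so the proof has a real gap, not merely a cosmetic one.

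The gap is repairable and localized: restore $\{c\}$ as a singleton coset, note that $c\in N(\mathbf{L})\cap P(\mathbf{L})$ lies below every strictly positive element (Lemma \ref{intzaras}.(ii)) and above every strictly negative one by duality, that $\alpha(c)=c$, and that injectivity of $\alpha$ on a one-element coset is trivial; the remaining cases you wrote then go through unchanged. Everything else in your argument is sound and matches the paper's (terser) proof --- in particular the observation that $\alpha(x^{\sim})\wedge p=p$ handles preservation of the Brouwer complement, and the derivation of $D(p)\cap E(p)=\Delta$ from \cite[L.~1.(ii), p.~28]{Birkhoff} together with subdirect irreducibility is exactly the paper's step (iv). But you should correct the fixpoint claim before relying on the coset decomposition.
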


\begin{proof}
(i) $\sim$ preserves meets and Kleene complements by Lemma \ref{intzaras}.
Moreover, the Brouwer complement is preserved because $\mathbf{L}$, by Lemma
\ref{gustoso}.(i), is an antiortholattice.

(ii) It is not hard to show, following in the footsteps of \cite{Kalman}, that
$D\left(  p\right)  $ preserves meets and Kleene complements; thus, it
suffices to show that Brouwer complements are also preserved. If $a\sim b$,
either $a=b=0$ or $a,b>0$. We have to prove that%
\[
\left(  a^{\sim}\vee\lozenge a\right)  \wedge p=\left(  b^{\sim}\vee\lozenge
b\right)  \wedge p\text{,}%
\]
but in both cases the left-hand and the right-hand side reduce to $p$ because
$\mathbf{L}$ is an antiortholattice.

(iii) Similar.

(iv) Suppose $\left(  x,y\right)  \in D\left(  p\right)  \cap E\left(
p\right)  $. Then $\left(  x\vee x^{\prime}\right)  \wedge p=\left(  y\vee
y^{\prime}\right)  \wedge p$ and $\left(  x\vee x^{\prime}\right)  \vee
p=\left(  y\vee y^{\prime}\right)  \vee p$. By virtue of \cite[L. 1.(ii), p.
28]{Birkhoff}, $x\vee x^{\prime}=y\vee y^{\prime}$ and so, since $x\sim y$, we
have that $x=y$. Thus $D\left(  p\right)  \cap E\left(  p\right)  =\Delta$,
whence our conclusion follows since $\mathbf{L}$ is s.i.
\end{proof}

\begin{theorem}
\label{merluzzo}The only nontrivial subdirectly irreducible members of
$\mathbb{V}_{3}$ are $\mathbf{D}_{2},\mathbf{D}_{3},\mathbf{D}_{4}%
,\mathbf{D}_{5}$.
\end{theorem}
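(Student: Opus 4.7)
The plan is to split the theorem into two stages: first, to show every nontrivial subdirectly irreducible member of $\mathbb{V}_3$ is a PBZ*-chain; second, to show any such chain of length at least six fails to be subdirectly irreducible.

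For the first stage, I start from Lemma \ref{gustoso}: any s.i.\ $\mathbf{L}$ in $\mathbb{V}_3\subseteq V(\mathbb{AOL})$ is an antiortholattice with $P\left(\mathbf{L}\right)\cup N\left(\mathbf{L}\right)=L$. The task is to rule out incomparable pairs. By Lemma \ref{intzaras}(ii), its $'$-dual, and the fact that $0$, $1$, and a Kleene fixpoint $c$ are comparable to everything (indeed $c$ is sandwiched between $N\left(\mathbf{L}\right)^+$ and $P\left(\mathbf{L}\right)^+$), any incomparable pair $a,b$ must lie entirely inside $P\left(\mathbf{L}\right)^+$ or entirely inside $N\left(\mathbf{L}\right)^+$; by symmetry of the argument under $'$, assume $a,b\in P\left(\mathbf{L}\right)^+$. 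Lemma \ref{intzaras}(i) gives $a\wedge b\in P\left(\mathbf{L}\right)^+$, and the computation $\left(a\vee b\right)'=a'\wedge b'<a\vee b$ gives $a\vee b\in P\left(\mathbf{L}\right)^+$. I then exhibit two nondiagonal pairs: $\left(b,a\wedge b\right)\in D\left(a\right)$ because both entries lie in $P\left(\mathbf{L}\right)^+$ (hence are $\sim$-equivalent), both have meet-with-$a$ equal to $a\wedge b$, and $b\ne a\wedge b$ since $b\not\le a$; dually, $\left(b,a\vee b\right)\in E\left(a\right)$ since $b\ne a\vee b$ because $a\not\le b$. Thus $D\left(a\right)$ and $E\left(a\right)$ are both strictly above $\Delta$, contradicting Proposition \ref{tomtom}(iv). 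Hence $\mathbf{L}$ is a chain.

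For the second stage, write a PBZ*-chain as $0<n_{1}<\cdots<n_{k}<p_{k}<\cdots<p_{1}<1$ with $N\left(\mathbf{L}\right)^+=\{n_{1},\ldots,n_{k}\}$, $P\left(\mathbf{L}\right)^+=\{p_{1},\ldots,p_{k}\}$, $p_{i}=n_{i}'$, and possibly a Kleene fixpoint $c$ inserted between $n_{k}$ and $p_{k}$. Since $|L|\ge 6$ is equivalent to $k\ge 2$, I assume $k\ge 2$ and exhibit two nontrivial congruences meeting trivially. Let $\alpha$ be the equivalence with classes $\{0\}$, $N\left(\mathbf{L}\right)^+$, $\{c\}$ (when present), $P\left(\mathbf{L}\right)^+$, $\{1\}$; and let $\beta$ collapse the convex block $\{n_{k},p_{k}\}$ (or $\{n_{k},c,p_{k}\}$ when $c$ exists) leaving every other element a singleton. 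Routine chain-arithmetic checks—using only that $\wedge=\min$ and $\vee=\max$, that $'$ swaps $N\left(\mathbf{L}\right)^+$ and $P\left(\mathbf{L}\right)^+$ bijectively while fixing $c$, and that in an antiortholattice $^\sim$ sends every positive element to $0$—confirm both $\alpha$ and $\beta$ are PBZ*-lattice congruences. Their intersection is $\Delta$: the unique nontrivial $\beta$-class contains at most one element from $N\left(\mathbf{L}\right)^+$ (namely $n_{k}$) and at most one from $P\left(\mathbf{L}\right)^+$ (namely $p_{k}$), so no two of its distinct elements share an $\alpha$-class. Hence $\mathbf{L}$ cannot be s.i.\ when $k\ge 2$, which forces $|L|\le 5$.

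The main obstacle is the realisation that Proposition \ref{tomtom}(iv) alone forces chainhood but cannot bound the chain's length: direct inspection shows that in any $\mathbf{D}_n$ with $n\ge 6$ and for every $p\in L$, at least one of $D\left(p\right)$, $E\left(p\right)$ is already $\Delta$, so the second stage genuinely needs the independent congruences $\alpha,\beta$ produced by hand rather than a further application of the $D/E$-machinery. The converse direction—that $\mathbf{D}_{2},\mathbf{D}_{3},\mathbf{D}_{4},\mathbf{D}_{5}$ are each subdirectly irreducible in $\mathbb{V}_{3}$—is immediate by inspecting their very small congruence lattices, again using that $^\sim$ trivialises on positive elements in an antiortholattice.
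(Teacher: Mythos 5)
Your first stage is correct and in fact makes explicit a step the paper's own proof glosses over: where the paper simply asserts that three distinct strictly positive elements ``must form a chain'', you derive comparability of any two elements of $P\left(\mathbf{L}\right)^{+}$ from the nondiagonal pairs $\left(b,a\wedge b\right)\in D\left(a\right)$ and $\left(b,a\vee b\right)\in E\left(a\right)$ together with Proposition \ref{tomtom}(iv), using Lemmas \ref{gustoso} and \ref{intzaras} exactly as intended. The problem lies in your second stage, and it is twofold. First, your claim that in every $\mathbf{D}_{n}$ with $n\geq6$ at least one of $D\left(p\right)$, $E\left(p\right)$ is already $\Delta$ for every $p$ is false once $n\geq8$: if $w<y<x$ are three strictly positive elements, then $\left(x,y\right)\in D\left(y\right)$ and $\left(w,y\right)\in E\left(y\right)$, so both are strictly above $\Delta$ --- this is precisely the paper's argument for excluding three strictly positive elements, and it applies to arbitrary, including infinite, chains. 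Second, and more seriously, your stage-two argument presupposes finiteness: you write the chain as $0<n_{1}<\cdots<n_{k}<p_{k}<\cdots<p_{1}<1$ and define $\beta$ by collapsing the \emph{innermost} pair $\left\{  n_{k},p_{k}\right\}  $. Nothing established up to this point guarantees that a subdirectly irreducible member of $\mathbb{V}_{3}$ is finite, and if $N\left(\mathbf{L}\right)^{+}$ has no maximum (e.g.\ the real unit interval with $x^{\prime}=1-x$ and the antiortholattice $^{\sim}$, which lies in $\mathbb{V}_{3}$ by Lemma \ref{katanga}) the class $\left\{  n_{k},p_{k}\right\}  $ does not exist. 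This cannot be patched within your chosen pair of congruences: since congruence classes of a chain are convex and permuted by $^{\prime}$, any nontrivial $\beta$ with $\alpha\cap\beta=\Delta$ for your $\alpha$ must have a class equal to the interval between the maximum of $N\left(\mathbf{L}\right)^{+}$ and the minimum of $P\left(\mathbf{L}\right)^{+}$, which need not exist.

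The gap is easy to close, but only by reinstating the $D/E$ machinery you set aside on the strength of the miscalculation above: when $\left\vert P\left(\mathbf{L}\right)^{+}\right\vert \geq3$ (which covers every infinite chain), a middle strictly positive element $y$ yields the contradiction with Proposition \ref{tomtom}(iv) just described, and your $\alpha,\beta$ argument is then only needed for the remaining finite case $\left\vert P\left(\mathbf{L}\right)^{+}\right\vert =2$, i.e.\ $\mathbf{D}_{6}$ and $\mathbf{D}_{7}$, where it works and amounts to the paper's own intersection of the partition congruence $F$ with $D\left(y\right)$. With that amendment your proof is correct and follows essentially the same route as the paper's, organised slightly more systematically.
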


\begin{proof}
Let $\mathbf{L}\in\mathbb{V}_{3}$ be a nontrivial subdirectly irreducible
algebra. So $0<1$. If $L$ contains no strictly positive element $c$, either
$\mathbf{L}=\mathbf{D}_{2}$ or $\mathbf{L}=\mathbf{D}_{3}$. Otherwise, we show
that there cannot be three distinct such elements. In fact, suppose the
contrary. Then these elements must form a chain $0<w<y<x<1$, with $w^{\prime
}<w,y^{\prime}<y,x^{\prime}<x$. But it is easy to check that $\left(
x,y\right)  \in D\left(  y\right)  $ and $\left(  w,y\right)  \in E\left(
y\right)  $, contra Proposition \ref{tomtom}.(iv). So $L$ has at most two
strictly positive elements $x,y$, which must be comparable if they exist, and
no element $a$ which is not comparable with $a^{\prime}$ by Lemma
\ref{gustoso}.(ii). If%
\[
0<x^{\prime}<y^{\prime}<y<x<1\text{,}%
\]
then%
\[
\left\{  \left\{  0\right\}  ,\left\{  1\right\}  ,\left\{  x\right\}
,\left\{  x^{\prime}\right\}  ,\left\{  y,y^{\prime}\right\}  \right\}
\]
is a partition of $L$, and it is easily checked that the corresponding
equivalence $F$ is a nonzero congruence (since one of its cosets is not a
singleton). Also, $D\left(  y\right)  \neq\Delta$, because $\left(
x,y\right)  \in D\left(  y\right)  $. However, $D\left(  y\right)  $ does not
identify $y$ and $y^{\prime}$, whence $F\cap D\left(  y\right)  =\Delta$. This
contradicts the subdirect irreducibility of $\mathbf{L}$. So $L$ contains a
chain $0<c<c^{\prime}<1$, while it may or may not contain a fixpoint
$d=d^{\prime}$. If so, $\mathbf{L}=\mathbf{D}_{5}$; if not, $\mathbf{L}%
=\mathbf{D}_{4}$.
\end{proof}

\begin{corollary}
$\mathbb{V}_{3}=V\left(  \mathbf{D}_{5}\right)  $.
\end{corollary}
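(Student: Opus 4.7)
The plan is to combine Theorem \ref{merluzzo} with Birkhoff's subdirect representation theorem and then exhibit $\mathbf{D}_2,\mathbf{D}_3,\mathbf{D}_4$ as subalgebras of $\mathbf{D}_5$, so that the four candidate subdirectly irreducibles all fit inside $V(\mathbf{D}_5)$. The reverse inclusion will be immediate from Lemma \ref{katanga}.

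In more detail, I would first verify the containment $V(\mathbf{D}_5)\subseteq \mathbb{V}_3$: since $\mathbf{D}_5$ is a PBZ* chain, Lemma \ref{katanga} gives $\mathbf{D}_5\in\mathbb{V}_3$, and $\mathbb{V}_3$ is a variety, hence closed under $H$, $S$ and $P$.

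For the nontrivial inclusion $\mathbb{V}_3\subseteq V(\mathbf{D}_5)$, I would fix $\mathbf{L}\in\mathbb{V}_3$ and apply Birkhoff's theorem to represent $\mathbf{L}$ as a subdirect product of subdirectly irreducible algebras in $\mathbb{V}_3$. By Theorem \ref{merluzzo}, each such factor is one of $\mathbf{D}_2,\mathbf{D}_3,\mathbf{D}_4,\mathbf{D}_5$, so it suffices to show that each $\mathbf{D}_i$ with $i\le 4$ lies in $V(\mathbf{D}_5)$. Writing $\mathbf{D}_5$ as the chain $0<c<d<c'<1$ with $d=d'$ and $c'$ the Kleene complement of $c$, the subsets $\{0,1\}$, $\{0,d,1\}$ and $\{0,c,c',1\}$ are each closed under $\wedge$, $\vee$, ${}'$ and ${}^{\sim}$ (the last because in any antiortholattice $a^{\sim}$ is $0$ or $1$), and the induced subalgebras are isomorphic to $\mathbf{D}_2,\mathbf{D}_3,\mathbf{D}_4$ respectively. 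This places all four subdirectly irreducibles in $IS(\mathbf{D}_5)\subseteq V(\mathbf{D}_5)$, so $\mathbf{L}\in V(\mathbf{D}_5)$, completing the argument.

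There is no real obstacle here: everything hinges on Theorem \ref{merluzzo} (already proved) and on the simple observation that the Brouwer complement in an antiortholattice depends only on whether its argument is $0$, so the subalgebra checks reduce to checking closure under $\wedge,\vee,{}'$ on the three listed subsets of $D_5$, which is immediate.
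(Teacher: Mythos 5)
Your proposal is correct and follows essentially the same route as the paper: the paper's proof is just the remark that $\mathbf{D}_{i}\leq\mathbf{D}_{5}$ for $i\leq5$, relying on Theorem \ref{merluzzo} for the list of subdirectly irreducibles and on Lemma \ref{katanga} for the containment $V\left(\mathbf{D}_{5}\right)\subseteq\mathbb{V}_{3}$, exactly as you do. Your version merely spells out the subalgebra checks and the appeal to Birkhoff's subdirect representation theorem that the paper leaves implicit.
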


\begin{proof}
In fact, $\mathbf{D}_{i}\leq\mathbf{D}_{5}$ for $i\leq5$.
\end{proof}

\subsection{A better basis for $V\left(  \mathbf{D}_{3}\right)  $}

Finally, we give a result for the single non-orthomodular cover of the Boolean
variety in {$\mathbf{L}_{\mathbb{PBZL}^{\ast}}$, namely, the variety
}$V\left(  \mathbf{D}_{3}\right)  $ generated by the $3$-element chain
$\mathbf{D}_{3}$. As we already recalled, one of the present writers proved in
\cite{CGP} that $V\left(  \mathbf{D}_{3}\right)  $ is axiomatised relative to
$V\left(  \mathbb{AOL}\right)  $ by the identities DIST, SDM and SK. The aim
of this subsection is showing that this basis is redundant, because%
\[
V\left(  \mathbf{D}_{3}\right)  =Mod\left(  Eq\left(  \mathbb{AOL}\right)
\cup\left\{  \text{SK}\right\}  \right)  .
\]
In other words, SK implies both DIST and SDM in the presence of the
antiortholattice axioms.

\begin{lemma}
\label{freccia}Let $\mathbf{L\in}Mod\left(  Eq\left(  \mathbb{AOL}\right)
\cup\left\{  \text{SK, SDM}\right\}  \right)  $. Then, for any $a,b,c\in L$:
{(i) $a\vee\square b=$}$\left(  {a\vee b}\right)  \wedge\left(  \Diamond
a\vee\square b\right)  ${; (ii) $a\vee\left(  b\wedge c\right)  =a\vee$%
}$\left(  \left(  \Diamond b\vee\square a\right)  {\wedge\left(  {a\vee
b}\right)  {\wedge}c}\right)  ${; (iii)\ $a\vee\left(  b\wedge c\right)
=a\vee\left(  \left(  {a\vee b}\right)  \wedge c\right)  =$; (iv)
$a\wedge\left(  b\vee c\right)  =a\wedge\left(  b\vee\left(  a\wedge c\right)
\right)  $; (v) $a\wedge\left(  b\vee c\right)  =\left(  a\wedge b\right)
\vee\left(  a\wedge c\right)  $.}
\end{lemma}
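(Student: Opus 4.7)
The plan is to treat the five items in order. For (i), the inequality $a\vee\square b\leq(a\vee b)\wedge(\Diamond a\vee\square b)$ is immediate from $a\leq\Diamond a$ and $\square b\leq b$. The reverse follows from a single application of SK, once one spots two small facts: $\square b$ is $\Diamond$-sharp, since $(\square b)^{\sim\sim}=b^{\prime\sim\sim\sim}=b^{\prime\sim}=\square b$ by Lemma \ref{basics}.(i), and hence $\Diamond(a\vee\square b)=\Diamond a\vee\square b$ by Lemma \ref{basics}.(vii). Then SK with $x:=a\vee b$ and $y:=a\vee\square b$ yields $(a\vee b)\wedge(\Diamond a\vee\square b)\leq\square(a\vee b)\vee a\vee\square b$, and SDM rewrites $\square(a\vee b)=\square a\vee\square b$, so the bound collapses to $a\vee\square b$ (since $\square a\leq a$).

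For (ii) and (iii) I would run essentially the same argument. For (ii), first use (i) with $a,b$ swapped to rewrite $(\Diamond b\vee\square a)\wedge(a\vee b)=b\vee\square a$, reducing the claim to $a\vee(b\wedge c)=a\vee((b\vee\square a)\wedge c)$; the $\leq$ direction is trivial, and for $\geq$ I would decompose via AOL2 with parameter $a$:
\[
(b\vee\square a)\wedge c=\bigl[(b\vee\square a)\wedge c\wedge a^{\sim}\bigr]\vee\bigl[(b\vee\square a)\wedge c\wedge\Diamond a\bigr],
\]
and bound each summand by $a\vee(b\wedge c)$. The first summand collapses to $b\wedge c\wedge a^{\sim}$ via the $V(\mathbb{AOL})$-identity $(b\vee\square a)\wedge a^{\sim}\approx b\wedge a^{\sim}$, which holds in every antiortholattice by case analysis ($a>0$: both sides are $0$; $a=0$: both sides are $b$). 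For the second summand, SK with $x:=(b\vee\square a)\wedge c$ and $y:=a$, together with SDM giving $\square((b\vee\square a)\wedge c)=(\square b\vee\square a)\wedge\square c$, gives the bound $((\square b\vee\square a)\wedge\square c)\vee a$; the $V(\mathbb{AOL})$-identity $(\square b\vee\square a)\wedge\square c\approx(\square b\wedge\square c)\vee(\square a\wedge\square c)$---valid because every $\square$-element lies in $\{0,1\}$ inside an antiortholattice---bounds this by $a\vee(b\wedge c)$. Part (iii) is strictly parallel: AOL2-decompose $(a\vee b)\wedge c$, use the companion identity $(a\vee b)\wedge a^{\sim}\approx b\wedge a^{\sim}$, and rerun the SK/SDM/sharp-distributivity argument on the second summand.

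For (iv) I apply the Kleene complement $'$ to (iii) and use the de Morgan laws of the bounded involution lattice reduct to obtain $a'\wedge(b'\vee c')=a'\wedge((a'\wedge b')\vee c')$; renaming $a',b',c'$ to $a,b,c$ (via the bijectivity of $'$) and swapping $b\leftrightarrow c$ using the commutativity of $\vee$ produces exactly (iv). For (v), combining (iv) with its $b\leftrightarrow c$ variant (which rewrites (iv) as $a\wedge(b\vee c)=a\wedge((a\wedge b)\vee c)$) and then reapplying (iv) with $b$ replaced by $a\wedge b$ yields $a\wedge(b\vee c)=a\wedge((a\wedge b)\vee(a\wedge c))$; since $(a\wedge b)\vee(a\wedge c)\leq a$, absorption collapses the right-hand side to $(a\wedge b)\vee(a\wedge c)$.

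The main obstacle is (i): one has to spot the right instance of SK and recognize that $\Diamond y$ and $\square(a\vee b)$ simplify via, respectively, the sharpness of $\square b$ and SDM. Everything else rides on this, together with two semantic facts about $V(\mathbb{AOL})$ that record the triviality of the sharp part of any antiortholattice---the ``$a^{\sim}$-collapse'' identity and the Boolean distributivity of $\square$-elements---both of which are trivially verified antiortholattice-by-antiortholattice, hence hold throughout $V(\mathbb{AOL})$.
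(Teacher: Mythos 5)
Your proof is correct, but it reaches (i)--(iii) by a genuinely different route from the paper. For (i), the paper instantiates SK as $b\wedge\Diamond a\leq\square b\vee a$ and then massages $a\vee\square b\vee(\Diamond a\wedge b)$ into the desired form using two restricted-distributivity lemmas imported from \cite{GLP1+} (laws that let one distribute across meets/joins when one argument is of the form $x^{\sim}$ or $\Diamond x$); you instead apply SK once to the pair $x:=a\vee b$, $y:=a\vee\square b$, exploiting the $\Diamond$-sharpness of $\square b$ and SDM to collapse both sides --- a shorter derivation that avoids the external citations. For (ii) and (iii) the paper again leans on those same cited lemmas to shuffle $\square a$ in and out of meets and joins, whereas you decompose the relevant term via AOL2 into its $a^{\sim}$-part and its $\Diamond a$-part and bound each piece, verifying the two auxiliary identities (the $a^{\sim}$-collapse and the distributivity of $\square$-elements) semantically in arbitrary antiortholattices and then transferring them to $Mod(Eq(\mathbb{AOL}))$ --- a legitimate move, and the same style of argument the paper itself uses in Lemma \ref{freccetta}. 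Your treatment of (iv) matches the paper's ``duality'' remark (just spelled out), and your (v) differs only cosmetically: you apply (iv) twice and absorb, while the paper applies (iv), a lattice absorption, and then (iii). The trade-off is that the paper's equational chains stay entirely syntactic and reusable in any context where the GLP1+ lemmas are available, while your argument is more self-contained relative to the present paper but relies on the semantic description of $V(\mathbb{AOL})$ as the variety generated by antiortholattices.
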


\begin{proof}
{\ (i)
\[%
\begin{array}
[c]{lll}%
{a\vee\square b} & ={a\vee\square b\vee}\left(  \Diamond a\wedge b\right)  &
\text{(SK)}\\
& ={a\vee}\left(  \left(  {\square b\vee}\Diamond a\right)  \wedge b\right)  &
\text{ \cite[L. 4.1, Prop. 5.2]{GLP1+}}\\
& =\left(  {a\vee b}\right)  \wedge\left(  {a\vee\square b\vee}\Diamond
a\right)  & \text{\cite[L. 5.10.(ix)]{GLP1+}, SDM}\\
& =\left(  {a\vee b}\right)  \wedge\left(  \Diamond a\vee\square b\right)  &
\text{Def. \ref{de:bazar}}%
\end{array}
\]
}

{\ (ii)
\[%
\begin{array}
[c]{lll}%
{a\vee}\left(  \left(  \Diamond b\vee\square a\right)  {\wedge\left(  {a\vee
b}\right)  {\wedge}c}\right)  & ={a\vee}\left(  \left(  {\square}a{\vee
b}\right)  \wedge c\right)  & \text{(i)}\\
& ={a\vee}\left(  {\square}a\wedge c\right)  {\vee}\left(  {b}\wedge c\right)
& \text{ \cite[L. 5.10.(ix)]{GLP1+}}\\
& =\left(  \left(  {a\vee\square}a\right)  \wedge\left(  {a\vee}c\right)
\right)  {\vee}\left(  {b}\wedge c\right)  & \text{\cite[L. 5.10.(ix)]{GLP1+}%
}\\
& ={a\vee\left(  b\wedge c\right)  } & \text{\cite[L. 4.1]{GLP1+}, Abs.}%
\end{array}
\]
}

(iii){%
\[%
\begin{array}
[c]{lll}%
{a\vee\left(  b\wedge c\right)  } & ={a\vee}\left(  \left(  \Diamond
b\vee\square a\right)  {{\wedge}c\wedge\left(  {a\vee b}\right)  }\right)  &
\text{(ii)}\\
& =\left(  {a\vee}\Diamond b\vee\square a\right)  {\wedge}\left(  {a\vee
}\left(  {c\wedge\left(  {a\vee b}\right)  }\right)  \right)  & \text{
\cite[L. 5.10.(ix)]{GLP1+}, SDM}\\
& =a\vee\left(  \left(  {a\vee b}\right)  \wedge c\right)  & \text{\cite[L.
4.1]{GLP1+}}%
\end{array}
\]
}

(iv) From (iii) by duality.

(v){%
\[%
\begin{array}
[c]{lll}%
{a\wedge\left(  b\vee c\right)  } & =a\wedge\left(  c\vee\left(  a\wedge
b\right)  \right)  & \text{(iv)}\\
& =\left(  a\wedge b\right)  \vee\left(  a\wedge\left(  c\vee\left(  a\wedge
b\right)  \right)  \right)  & \text{ Lattice prop.}\\
& =\left(  a\wedge b\right)  \vee\left(  a\wedge c\right)  & \text{(iii)}%
\end{array}
\]
}
\end{proof}

Thus, DIST follows once SDM is satisfied. The next step consists in showing
that SDM directly follows from SK in the presence of the remaining axioms of
$V\left(  \mathbb{AOL}\right)  $.

\begin{lemma}
\label{freccetta}Let $\mathbf{L\in}Mod\left(  Eq\left(  \mathbb{AOL}\right)
\cup\left\{  \text{SK}\right\}  \right)  $. Then, for any $a,b\in L$: {(i) if
}$a\wedge b=0$, then either $a=0$ or $b=0$; (ii) $\left(  a\wedge b\right)
^{\sim}=a^{\sim}\vee b^{\sim}$.
\end{lemma}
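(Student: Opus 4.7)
The plan is to reduce, via Lemma \ref{gustoso}(1), to the case where $\mathbf{L}$ is subdirectly irreducible and hence an antiortholattice; since (ii) is an identity, this suffices by Birkhoff's subdirect representation theorem, and (i) is most naturally proved in the same restricted setting. In an antiortholattice the operators $\Diamond$ and $\Box$ are bivalent ($\Diamond x = 1$ whenever $x > 0$, and $\Box x = 0$ whenever $x < 1$), and it is precisely this bivalence that converts SK from a relatively mild equation into a strong structural constraint.

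For (i), assuming $\mathbf{L}$ is subdirectly irreducible (hence an antiortholattice), suppose $a \wedge b = 0$ with $a, b > 0$, aiming at a contradiction. The cases $a = 1$ or $b = 1$ immediately force the other factor to be $0$, so we may assume $0 < a, b < 1$. Then $\Diamond b = b^{\sim\sim} = 1$ and $\Box a = a^{\prime\sim} = 0$ (since $0 < a' < 1$). SK with $x := a, y := b$ therefore collapses to $a = a \wedge \Diamond b \leq \Box a \vee b = b$; by symmetry $b \leq a$. Hence $a = b$, and so $a = a \wedge b = 0$, contradicting $a > 0$.

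For (ii), still in the subdirectly irreducible case: if $a \wedge b > 0$ then both $a$ and $b$ are strictly positive, so $(a \wedge b)^\sim = 0 = a^\sim \vee b^\sim$; if $a \wedge b = 0$ then part (i) puts one of $a, b$ — say $a$ — equal to $0$, and both sides of (ii) then equal $1$. Hence (ii) holds in every subdirectly irreducible member of $Mod(Eq(\mathbb{AOL}) \cup \{\text{SK}\})$, and being an identity it extends to the whole class.

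The principal obstacle is identifying the correct framework: taken on its own, SK is close to vacuous in an arbitrary member of the class, since neither $\Diamond b$ nor $\Box a$ need simplify. Only the descent to subdirectly irreducible algebras — which by Lemma \ref{gustoso}(1) are antiortholattices, and so possess a bivalent Brouwer complement — gives SK enough grip to convert $a \wedge b = 0$ into $a \leq b$ and $b \leq a$, thereby closing both parts of the argument in one stroke.
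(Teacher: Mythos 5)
Your proposal is correct and takes essentially the same route as the paper: reduce via Lemma \ref{gustoso} to the subdirectly irreducible (hence antiortholattice) case, use the bivalence of $\Diamond$ and $\square$ there to make SK turn $a\wedge b=0$ with $0<a,b<1$ into $a\leq b$ and a contradiction, and then obtain (ii) by the same two-case analysis. Like the paper, you in fact establish (i) only for antiortholattices, which is all that part (ii) requires --- and you are slightly more explicit than the paper in noting that only (ii), being an identity, automatically lifts back from the subdirectly irreducible members to the whole class.
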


\begin{proof}
(i) $\mathbf{L}$ is a subdirect product of antiortholattices in $Mod\left(
Eq\left(  \mathbb{AOL}\right)  \cup\left\{  \text{SK}\right\}  \right)  $ by
Lemma \ref{gustoso}.(i), whence it belongs to the quasivariety generated by
such antiortholattices. Now, suppose ex absurdo that $a\wedge b=0$, but
$a,b>0$. This immediately implies that $a,b<1$. By SK, moreover, we would have
that $a=a\wedge\Diamond b\leq\square a\vee b=b$, whence $a\wedge b=a>0 $, a contradiction.

(ii) By Lemma \ref{gustoso}.(i), we can safely assume that $\mathbf{L}$ is an
antiortholattice. If $a\wedge b>0$, then $a,b>0$, whence $\left(  a\wedge
b\right)  ^{\sim}=0=a^{\sim}\vee b^{\sim}$. If $a\wedge b=0$, then by (i)
either $a=0$ or $b=0$, and thus $\left(  a\wedge b\right)  ^{\sim}=1=a^{\sim
}\vee b^{\sim}$.
\end{proof}

\begin{corollary}
$V\left(  \mathbf{D}_{3}\right)  =Mod\left(  Eq\left(  \mathbb{AOL}\right)
\cup\left\{  \text{SK}\right\}  \right)  $.
\end{corollary}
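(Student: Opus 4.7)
The plan is to prove the corollary by a double inclusion, with the nontrivial direction reducing via the two preceding lemmas to the axiomatisation of $V(\mathbf{D}_3)$ given in \cite{CGP}.

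For the inclusion $V(\mathbf{D}_3) \subseteq Mod(Eq(\mathbb{AOL}) \cup \{\text{SK}\})$, I would simply note that $V(\mathbf{D}_3) \subseteq V(\mathbb{AOL})$, so every member of $V(\mathbf{D}_3)$ satisfies $Eq(\mathbb{AOL})$; and since SK is one of the axioms provided in \cite{CGP} for $V(\mathbf{D}_3)$ relative to $V(\mathbb{AOL})$, SK holds in $V(\mathbf{D}_3)$ as well.

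For the reverse inclusion, let $\mathbf{L} \in Mod(Eq(\mathbb{AOL}) \cup \{\text{SK}\})$. The strategy is to bootstrap our way to the full basis DIST, SDM, SK of $V(\mathbf{D}_3)$ and then quote the result from \cite{CGP}. First, applying Lemma \ref{freccetta}.(ii) directly, $\mathbf{L}$ satisfies SDM. Hence $\mathbf{L} \in Mod(Eq(\mathbb{AOL}) \cup \{\text{SK, SDM}\})$, which puts us in a position to invoke Lemma \ref{freccia}.(v): for all $a,b,c\in L$, $a\wedge(b\vee c) = (a\wedge b)\vee(a\wedge c)$, i.e.\ DIST holds.

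Consequently $\mathbf{L} \in Mod(Eq(\mathbb{AOL}) \cup \{\text{DIST, SDM, SK}\})$, and the axiomatisation from \cite{CGP} recalled at the beginning of Section \ref{porto} gives $\mathbf{L} \in V(\mathbf{D}_3)$. The main (and only nontrivial) obstacle has already been handled in the two technical lemmas \ref{freccia} and \ref{freccetta}, so the corollary itself is essentially a bookkeeping step chaining them together with the \cite{CGP} basis.
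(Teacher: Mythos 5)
Your proposal is correct and follows essentially the same route as the paper: the paper's proof is the one-line remark that the corollary follows from Lemmas \ref{freccia} and \ref{freccetta}, and you have simply spelled out the intended chaining (SK yields SDM by Lemma \ref{freccetta}.(ii), then SK together with SDM yields DIST by Lemma \ref{freccia}.(v), after which the \cite{CGP} basis applies). The easy inclusion you add is also handled correctly.
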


\begin{proof}
Our result follows from Lemmas \ref{freccia} and \ref{freccetta}.
\end{proof}

\section{Horizontal sums\label{orizzo}}

T{he technique of \emph{horizontal sums}, in itself a crucial tool for lattice
theorists, has been investigated for orthomodular lattices e.g. in
\cite{Chajdor, Hard}. In \cite{GLP1+}, we adapted it to PBZ*-lattices along
the following lines.}

{\ Let $\mathbf{L}$ be an orthomodular lattice and $\mathbf{M}$ be any
PBZ$^{\ast}$-lattice, with $L\cap M=\left\{  0,1\right\}  $. The
\emph{horizontal sum} $\mathbf{L}\boxplus\mathbf{M}$ of $\mathbf{L}%
,\mathbf{M}$ is the algebra%
\[
\left\langle L\cup M,{\wedge}^{\mathbf{L}\boxplus\mathbf{M}}{,\,\vee
^{\mathbf{L}\boxplus\mathbf{M}}{,\,}^{\prime\mathbf{L}\boxplus\mathbf{M}%
}{,\,^{\sim\mathbf{L}\boxplus\mathbf{M}},}0,1}\right\rangle ,
\]
where for $a\in L$, ${a{\,}^{\prime\mathbf{L}\boxplus\mathbf{M}}=a}%
^{{\prime\mathbf{L}}}{{\,,\,a^{\sim\mathbf{L}\boxplus\mathbf{M}}=a}}%
^{\sim\mathbf{L}}$, while for $a\in M$, ${a{\,}^{\prime\mathbf{L}%
\boxplus\mathbf{M}}=a}^{{\prime}\mathbf{M}}{\,}$, ${{\,a^{\sim\mathbf{L}%
\boxplus\mathbf{M}}=a}}^{\sim\mathbf{M}}$. Meets are defined as follows:%
\[
a{\wedge}^{\mathbf{L}\boxplus\mathbf{M}}b=\left\{
\begin{array}
[c]{l}%
a{\wedge}^{\mathbf{L}}b\text{ if }a,b\in L\text{;}\\
a{\wedge}^{\mathbf{M}}b\text{ if }a,b\in M\text{;}\\
0\text{, otherwise.}%
\end{array}
\right.
\]
It is not too hard to show that $\mathbf{L}\boxplus\mathbf{M}$ is a
PBZ$^{\ast}$-lattice, and to acknowledge that our assumption to the effect
that at least one of the summands is orthomodular is needed in order to prove
that $a\wedge a^{\prime}\leq b\vee b^{\prime}$ for all }$a,b\in L\cup M${.
}The above construction, moreover, can be easily generalised to an arbitrary
number of summands $\bigoplus\limits_{i\in I}${$\mathbf{L}_{i}$}, provided
that at most one {$\mathbf{L}_{i}$} fails to be orthomodular.

We now proceed with the next

\begin{definition}
A \emph{block} of a {PBZ$^{\ast}$-lattice} $\mathbf{L}$ is a maximal
subalgebra of $\mathbf{L}$ which is either a Boolean algebra or an
antiortholattice chain. By $B(\mathbf{L})$ we denote the set of all blocks of
$\mathbf{L}$.
\end{definition}

We remark that, if $\mathbf{L}=\bigoplus\limits_{{\mathbf{L}_{i}\in
}B(\mathbf{L})}${$\mathbf{L}_{i}$}, then for every subalgebra $\mathbf{H}%
\leq\mathbf{L}$ we have that $\mathbf{H}=\bigoplus\limits_{{\mathbf{H}_{i}\in
}B(\mathbf{H})}${$\mathbf{H}_{i}$}. We further denote by $\mathbb{HPBZ}^{\ast
}$ the class of subdirectly irreducible {PBZ$^{\ast}$-lattices} which are a
horizontal sum of their own blocks, and by $\mathbb{HPBZ}_{fin}^{\ast}$ the
class of finite members of $\mathbb{HPBZ}^{\ast}$. For $a,b\in L$, moreover we
let%
\[
\gamma\left(  a,b\right)  =\left(  a\vee b\right)  \wedge\left(  a\vee
b{^{\sim}}\right)  \wedge\left(  a{^{\sim}}\vee b\right)  \wedge\left(
a{^{\sim}}\vee b{^{\sim}}\right)
\]
and we write $aCb$ if $\gamma\left(  a,b\right)  =0$ and $a\bar{C}b$ otherwise.

\begin{lemma}
\label{fangala}Let $\mathbf{L}$ be a {PBZ$^{\ast}$-lattice and let }$a\in L${.
If }$a^{\sim}=0$, then for any $b\in L$, we have that $aCb$.
\end{lemma}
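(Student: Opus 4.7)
The plan is to simply unfold the definition of $\gamma(a,b)$ under the hypothesis $a^{\sim}=0$ and use the basic BZ-axiom $b\wedge b^{\sim}=0$ from Definition \ref{de:bazar}(i). Substituting $a^{\sim}=0$ into the last two conjuncts gives $a^{\sim}\vee b = b$ and $a^{\sim}\vee b^{\sim}=b^{\sim}$, so that
\[
\gamma(a,b) = (a\vee b)\wedge(a\vee b^{\sim})\wedge b\wedge b^{\sim}.
\]
Since $b\wedge b^{\sim}=0$ appears as a meet factor, the whole expression is forced to $0$, and thus $aCb$ by definition. There is no real obstacle; the statement is essentially a one-line unfolding, and the only ingredient beyond arithmetic in bounded lattices is the Brouwer non-contradiction law $b\wedge b^{\sim}=0$ which holds in every BZ-lattice (hence in every PBZ$^{\ast}$-lattice).
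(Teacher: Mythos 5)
Your proposal is correct and matches the paper's own proof exactly: the paper likewise substitutes $a^{\sim}=0$ to reduce $\gamma(a,b)$ to $(a\vee b)\wedge(a\vee b^{\sim})\wedge b\wedge b^{\sim}$ and concludes from $b\wedge b^{\sim}=0$. Nothing to add.
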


\begin{proof}
In fact, in such a case $\gamma\left(  a,b\right)  =\left(  a\vee b\right)
\wedge\left(  a\vee b{^{\sim}}\right)  \wedge b\wedge b{^{\sim}=0}$.
\end{proof}

We now adapt some arguments from \cite{BK71} to our setting. The next lemma
provides necessary and sufficient conditions for a {PBZ$^{\ast}$-lattice to be
representable as }horizontal sum of its blocks.

\begin{lemma}
\label{scucca}For a {PBZ$^{\ast}$-lattice} $\mathbf{L}$ the following are equivalent:

\begin{enumerate}
\item $\mathbf{L}=\bigoplus\limits_{{\mathbf{L}_{i}\in}B(\mathbf{L})}%
${$\mathbf{L}_{i}$};

\item $\mathbf{L}$ satisfies the conditions:

\begin{enumerate}
\item if $a\bar{C}b$ then $a\vee b=1$;

\item if $a,b\notin\{0,1\}$, $a^{\sim}=0$ and $\Diamond b=b$, then $a\vee b=1$;

\item if $a^{\sim}=b^{\sim}=0$, then $a\leq b$ or $b\leq a$;

\item if $a\notin\{0,1\}$, then $a^{\sim}=0$ or $\Diamond a=a$.
\end{enumerate}
\end{enumerate}
\end{lemma}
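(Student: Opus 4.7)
The plan is to prove the two implications separately; the forward direction is essentially a direct unpacking of the horizontal sum construction, whereas the converse is the substantive part. For $(1)\Rightarrow(2)$, I would observe that within any single block, be it Boolean or antiortholattice chain, a direct computation yields $\gamma(a,b)=0$: in a Boolean block this uses $a^\sim=a'$ together with distributivity, while in an antiortholattice chain block it uses $a^\sim=b^\sim=0$ for non-trivial elements. Hence $a\bar{C}b$ can only occur between distinct blocks, which forces $a\vee b=1$ by construction and yields (a). For (b), an element with $a^\sim=0$ and $a\notin\{0,1\}$ can live only in an antiortholattice chain block (a Boolean block would yield $a=1$), while a nontrivial $\Diamond$-sharp element can live only in a Boolean block; the two therefore lie in distinct blocks and $a\vee b=1$. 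Condition (c) follows because at most one block is non-orthomodular, so all elements with $^\sim=0$ share a single chain, and (d) is immediate from block membership.

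For $(2)\Rightarrow(1)$, I would use (d) to partition $L\setminus\{0,1\}$ into $N=\{a\notin\{0,1\}:a^\sim=0\}$ and $S^+=\{a\notin\{0,1\}:\Diamond a=a\}$ (disjointness being immediate from $\Diamond a=a^{\sim\sim}$), and first show that $N\cup\{0,1\}$ is an antiortholattice chain subalgebra. It is a chain by (c), closure under $^\sim$ is trivial, and closure under $'$ is the delicate step: if $a\in N$ but $a'\in S^+$, then the sharpness of $a'$ yields $(a')^\sim=a$, hence $a^\sim=(a')^{\sim\sim}=a'$, forcing $a'=0$. Maximality as an antiortholattice chain subalgebra is automatic, so $N\cup\{0,1\}$ is a block of $\mathbf{L}$.

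I would next show that $S^+\cup\{0,1\}$ is a subalgebra: closure under $'$ and $^\sim$ follows from the identity $a=\Diamond a \Leftrightarrow a'=a^\sim$, closure under $\wedge$ from $(a\wedge b)^\sim\geq a^\sim\vee b^\sim>0$ (ruling out $a\wedge b\in N$), and closure under $\vee$ by applying (b) to $a\vee b$ and $a$: if $a\vee b\in N$ then $(a\vee b)\vee a=1$, but the left side collapses to $a\vee b$, a contradiction. Since $^\sim=\,'$ on $S^+\cup\{0,1\}$, this subalgebra is an ortholattice, and paraorthomodularity inherited from $\mathbf{L}$ upgrades it to an orthomodular lattice; condition (a) is then precisely the Bruns--Kalmbach criterion that characterises OMLs decomposable as horizontal sums of their maximal Boolean subalgebras. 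Finally, I would verify that $\mathbf{L}$ agrees with the horizontal sum of $N\cup\{0,1\}$ with these Boolean blocks: for $a\in N$ and $b\in S^+$ the identity $a\vee b=1$ is exactly (b), while $a\wedge b=0$ is established by ruling out, via (d), both $a\wedge b\in N$ (which forces $b=1$ from $b^\sim\leq(a\wedge b)^\sim=0$) and $a\wedge b\in S^+$ (which, via paraorthomodularity applied to $a\wedge b\leq a$, produces a positive sharp $d=(a\wedge b)'\wedge a\leq a$, whereupon (b) applied to $a$ and $d$ yields $a=a\vee d=1$). DeMorgan then handles the meets between distinct Boolean blocks once the corresponding joins are known from (a).

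The main obstacle is this last step: extracting $a\wedge b=0$ for $a\in N$ and $b\in S^+$ is not an immediate consequence of the stated conditions and genuinely requires combining paraorthomodularity with (b) and (d). A secondary subtlety is confirming that the constituents of the decomposition exhaust and exactly match the blocks of $\mathbf{L}$; this rests on the observations that any maximal Boolean subalgebra of $\mathbf{L}$ sits in $S^+\cup\{0,1\}$, and that the only maximal antiortholattice chain subalgebra properly extending $\{0,1\}$ is $N\cup\{0,1\}$.
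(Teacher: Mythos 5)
Your overall strategy coincides with the paper's: the forward direction by inspecting blocks, and the converse by splitting $L\setminus\{0,1\}$ via (d) into the unsharp part $N$ (which (c) makes a chain, and which you correctly show is closed under $'$ by the same $\Diamond a'=a'\Rightarrow a^{\sim}=a'$ computation the paper uses) and the $\Diamond$-sharp part, whose induced ortholattice is orthomodular by paraorthomodularity and decomposes into Boolean blocks by condition (a) together with the Bruns--Kalmbach criterion -- this is exactly the paper's appeal to \cite[Lemma 1]{BK71}. Most of your added detail (closure of $S^{+}\cup\{0,1\}$ under meets and joins, disjointness of $N$ and $S^{+}$) is correct and fills in steps the paper leaves implicit.

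The one genuine flaw is in your argument that $a\wedge b=0$ for $a\in N$ and $b\in S^{+}$. In the subcase $a\wedge b\in S^{+}$ you set $d=(a\wedge b)'\wedge a$ and call it ``positive sharp'', then apply (b) to $a$ and $d$. Positivity of $d$ is fine (paraorthomodularity plus $a\wedge b\neq a$), but nothing guarantees that $d$ is $\Diamond$-sharp: it is the meet of a sharp element with the unsharp element $a$, and such meets need not be sharp in a PBZ*-lattice, so (b) does not apply as stated. The step is repairable -- by (d), $d$ lies in $N$ or in $S^{+}$, and each alternative contradicts (b) (if $d\in S^{+}$ then $a\vee d=1$ contradicts $d\leq a<1$; if $d\in N$ then $d\vee(a\wedge b)'=1$ contradicts $d\leq(a\wedge b)'<1$) -- but the cleanest fix is the one you already use for meets across Boolean blocks and which the paper implicitly relies on: since $N\cup\{0,1\}$ is closed under $'$, write $a\wedge b=(a'\vee b')'$ with $a'\in N$ and $b'\in S^{+}$, apply (b) to get $a'\vee b'=1$, and conclude $a\wedge b=0$ directly. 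With that substitution your proof is complete and matches the paper's.
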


\begin{proof}
($1\rightarrow2$)

(a) Note that if $a\bar{C}b$, then $a,b$ are sharp elements belonging to
different Boolean blocks, and therefore their join is $1$.

(b) If $a,b\notin\{0,1\}$, $a^{\sim}=0$ and $\Diamond b=b$, then $a$ is
unsharp and $b$ is sharp, whence they belong to different blocks and thus
$a\vee b=1$.

(c) and (d) are obvious.

($2\rightarrow1$) If $a\notin S_{K}(L)$, then by (b) $a^{\sim}=0$ and so by
Lemma \ref{fangala} we have that, for any $b\in L$, $aCb$. Therefore, by
condition (a) and \cite[Lemma 1]{BK71}, the orthomodular subalgebra with
universe $S_{K}(L)$ is a horizontal sum of the Boolean blocks of $\mathbf{L}$.
Consider the set
\begin{equation}
U(L)=\left\{  0\right\}  \cup\{a\in L:a^{\sim}=0\}\text{.} \label{U(L)}%
\end{equation}
By condition (c), $U(L)$ is linearly ordered and therefore it closed under
lattice operations. Now, let $a\in U(L)$ and $0<a<1$, whence $0<a^{\prime}<1
$, and suppose further that $a^{\prime\sim}\not =0$. By condition (d),
$\Diamond a^{\prime}=a^{\prime}$. Thus $a$ is sharp and so $a^{\prime}%
=a^{\sim}=0$, a contradiction. Therefore $U(L)$ is closed under $^{\prime} $,
and then it is the universe of a linearly ordered subalgebra of $\mathbf{L}$
which is an antiortholattice. Finally, the fact that $\mathbf{L}%
=\bigoplus\limits_{{\mathbf{L}_{i}\in}B(\mathbf{L})}${$\mathbf{L}_{i}$}
follows from item (b).
\end{proof}

If $\mathbf{L}=\bigoplus\limits_{{\mathbf{L}_{i}\in}B(\mathbf{L})}%
${$\mathbf{L}_{i}$ and }$\mathbf{L}\notin\mathbb{OML}$, then for exactly one
$i $ we have that {$\mathbf{L}_{i}$ is an antiortholattice chain. We now show
that }$\mathbf{L}$ is s.i. iff such an {$\mathbf{L}_{i}=\mathbf{D}_{j}$, for
some }$3\leq j\leq5$.

\begin{lemma}
\label{lem:hsumsdi} Let $\mathbf{L}=\bigoplus\limits_{{\mathbf{L}_{i}\in
}B(\mathbf{L})}${$\mathbf{L}_{i}$ and }$\mathbf{L}\notin\mathbb{OML}$. Then
$\mathbf{L}$ is subdirectly irreducible PBZ*-lattice iff its unique
non-Boolean summand is either $\mathbf{D}_{3}$ or $\mathbf{D}_{4}$ or
$\mathbf{D}_{5}$.
\end{lemma}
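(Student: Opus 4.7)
The strategy is to reduce the question to the congruence lattice of the unique non-orthomodular block. Since $\mathbf{L} \notin \mathbb{OML}$, exactly one block, call it $\mathbf{L}^{\ast}$, is a nontrivial antiortholattice chain, hence isomorphic either to some $\mathbf{D}_{n}$ with $n \geq 3$ or to an infinite antiortholattice chain. I will show that the proper nontrivial congruences of $\mathbf{L}$ are in bijection with those of $\mathbf{L}^{\ast}$, so that $\mathbf{L}$ is s.i.\ iff $\mathbf{L}^{\ast}$ is s.i., and finish via Theorem~\ref{merluzzo}.

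Three observations drive the reduction. \emph{(A)} In any antiortholattice, a congruence $\theta$ with $0 \mathrel{\theta} a$ for some $a > 0$ is total, since $a^{\sim} = 0$ while $0^{\sim} = 1$ force $1 \mathrel{\theta} 0$. \emph{(B)} In any Boolean algebra of more than two elements, every nontrivial congruence identifies $0$ with some nonzero element (congruence-ideal correspondence). \emph{(C)} (Main chase.) Suppose $\theta$ is a congruence on $\mathbf{L}$ with $0 \mathrel{\theta} a$ for some $a \in \mathbf{L}_{i}$, $0 < a < 1$, and suppose some block $\mathbf{L}_{j} \neq \mathbf{L}_{i}$ contains a $c$ with $0 < c < 1$. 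Applying $^{\prime}$ gives $1 \mathrel{\theta} a^{\prime}$, and the horizontal-sum clauses $a \vee c = 1$ and $a^{\prime} \wedge c = 0$ yield $c = 0 \vee c \mathrel{\theta} a \vee c = 1$ and $c = 1 \wedge c \mathrel{\theta} a^{\prime} \wedge c = 0$, so $\theta = \nabla$. Crucially (C) works uniformly whether $\mathbf{L}_{j}$ is Boolean or antiortholattice, since only the cross-block values of $\wedge$ and $\vee$ are used.

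Combining (A)--(C), any proper nontrivial congruence $\theta$ of $\mathbf{L}$ restricts trivially to every Boolean block (else (B) and (C) would force $\theta = \nabla$); its restriction $\alpha$ to $\mathbf{L}^{\ast}$ has singleton $0$- and $1$-classes by (A); and $\alpha$ is itself nontrivial, for otherwise any cross-block pair $(a,b) \in \theta$ with $a \in \mathbf{L}_{i}$, $b \in \mathbf{L}_{j}$, $i \neq j$, would give $a = a \wedge a \mathrel{\theta} a \wedge b = 0$, rendering $\theta|_{\mathbf{L}_{i}}$ nontrivial, a contradiction. Conversely, any proper nontrivial congruence $\alpha$ on $\mathbf{L}^{\ast}$ (which has singleton $0$- and $1$-classes by (A) automatically) lifts to a congruence $\tilde{\alpha} = \alpha \cup \Delta_{L \setminus L^{\ast}}$ on $\mathbf{L}$, because cross-block meets and joins are uniformly $0$ and $1$ respectively whenever neither argument lies in $\{0,1\}$. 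This yields the claimed isomorphism between the proper nontrivial congruence lattices of $\mathbf{L}$ and $\mathbf{L}^{\ast}$, and Theorem~\ref{merluzzo} (together with the fact that $\mathbf{L}^{\ast}$ is non-Boolean, excluding $\mathbf{D}_{2}$) delivers the equivalence. The main delicate point is the uniform chase in (C); everything else is routine bookkeeping once the horizontal-sum description of cross-block operations is brought to bear.
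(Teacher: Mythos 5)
Your argument is correct, and it is organized differently from the paper's. The paper proves the two directions by separate, more hands-on means: for the ``only if'' direction it observes that a chain block properly containing $\mathbf{D}_{5}$ contains a subchain $0<a<b<b^{\prime}<a^{\prime}<1$ and exhibits two explicit nonzero congruences of $\mathbf{L}$ with trivial intersection; for the ``if'' direction it shows that a proper congruence of $\mathbf{L}\boxplus\mathbf{D}_{j}$ can neither identify distinct elements of the orthomodular part (citing a Bruns--Harding lemma to the effect that a nontrivial orthomodular congruence has a nontrivial $0$-class, then running essentially your chase (C)) nor identify an orthomodular element with a chain element (via $b^{\sim}\,\theta\,a^{\sim}=a^{\prime}$), so that subdirect irreducibility is inherited from $\mathbf{D}_{j}$. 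You instead establish a single order-isomorphism between the proper nontrivial congruence lattices of $\mathbf{L}$ and of its chain block $\mathbf{L}^{\ast}$ --- the restriction map one way, the lift $\alpha\mapsto\alpha\cup\Delta$ the other --- and then delegate the entire classification of subdirectly irreducible chains to Theorem~\ref{merluzzo} via Lemma~\ref{katanga}. This buys uniformity: both directions of the equivalence fall out of one correspondence, you avoid re-deriving the failure of subdirect irreducibility for long chains (which Theorem~\ref{merluzzo} already encodes), and your blockwise use of the Boolean congruence--ideal correspondence replaces the external orthomodular lemma. The cost is that you must verify the lift $\alpha\cup\Delta$ is a congruence (which does work, precisely because cross-block meets and joins are constantly $0$ and $1$ away from the bounds) and that you lean on Theorem~\ref{merluzzo}, making the result less self-contained than the paper's version; the paper's explicit $\theta_{1},\theta_{2}$ construction, by contrast, is checkable in isolation. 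Both proofs share the same engine, namely the observation that collapsing $0$ with a nonzero element of one block propagates through $a\vee c=1$ and $a^{\prime}\wedge c=0$ to collapse everything.
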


\begin{proof}
Suppose first that {$\mathbf{L}_{i}$ is not a Boolean algebra and that it is
different from $\mathbf{D}_{j}$ for all }$3\leq j\leq5$, {whence }%
$\mathbf{D}_{6}\leq${$\mathbf{L}_{i}$. Therefore, there is a subchain in
$\mathbf{L}_{i}$ having the following form:}%
\[
0<a<b<b^{\prime}<a^{\prime}<1\text{.}%
\]

It is not hard to check that the equivalences%
\begin{align*}
\theta_{1}  &  =\left\{  \left(  x,y\right)  \in L^{2}:b\leq x,y\leq
b^{\prime}\text{ or }x=y\right\} \\
\theta_{2}  &  =\left\{  \left(  x,y\right)  \in L^{2}:a\leq x,y\leq b\text{
or }b^{\prime}\leq x,y\leq a^{\prime}\text{ or }x=y\right\}
\end{align*}
are nonzero congruences on {$\mathbf{L}$, and that }$\theta_{1}\cap\theta
_{2}=\Delta$. Consequently, {$\mathbf{L}$ is not s.i.}

Conversely, it suffices to show that $\mathbf{L}\boxplus${$\mathbf{D}_{j}$ is
subdirectly irreducible for all }$3\leq j\leq5$, and for any orthomodular
lattice $\mathbf{L}$. First, we prove that a congruence $\theta<\nabla$ on
$\mathbf{L}\boxplus${$\mathbf{D}_{j}$ cannot identify distinct elements }$a,b$
of $L$, for suppose otherwise. Then the congruence class of $0$ is nontrivial
\cite[Lemma 4.2]{BH} and there is $0<a<1$ s.t. $a\theta0$ and $a^{\prime
}\theta1$. Let further $b$ be any element of $D${$_{j}$ different from }$0$
and from $1$. We have that%
\[
0=a^{\prime}\wedge b\theta1\wedge b=b=0\vee b\theta a\vee b=1\text{,}%
\]
a contradiction. Next, we prove that $\theta$ cannot identify elements of $L $
with elements of $D${$_{j}$ either. In fact, if }$0<a,b<1$, $a\in L$, $b\in
D${$_{j}$ and }$a\theta b${, then }$0=b^{\sim}\theta a^{\sim}=a^{\prime}$,
contradicting what we have previously established. So, $\theta$ can identify
at most elements of $D${$_{j}$. It follows that }$\mathbf{L}\boxplus
${$\mathbf{D}_{j}$ is subdirectly irreducible if $\mathbf{D}_{j}$ is such, and
this happens for all }$3\leq j\leq5$.
\end{proof}

Let now $H$ be a subclass of $\mathbb{HPBZ}_{fin}^{\ast}$ that satisfies the
following properties:

\begin{description}
\item[(a)] If $\mathbf{H}\in H$ and $\mathbf{L}\in\mathbb{HPBZ}_{fin}^{\ast}$
is a subalgebra of $\mathbf{H}$, then $\mathbf{L}$ belongs to $H$;

\item[(b)] If $\mathbf{L}\boxplus${$\mathbf{D}_{4}$}$\in H$, then
$\mathbf{L}\boxplus${$\mathbf{D}_{3}$} is in $H$.
\end{description}

Let $\mathfrak{L}$ be the complete lattice whose universe is the set of all
classes satisfying Conditions (a) and (b) above, lattice-ordered by inclusion.

\begin{theorem}
\label{thm:L<->H} $\mathfrak{L}$ is isomorphic to the lattice $\mathfrak{H}$
of subvarieties of $V(\mathbb{HPBZ}^{\ast})$.
\end{theorem}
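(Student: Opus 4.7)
The plan is to construct a pair of mutually inverse, order-preserving maps $\Phi\colon\mathfrak{H}\to\mathfrak{L}$ and $\Psi\colon\mathfrak{L}\to\mathfrak{H}$ given by
\[
\Phi(\mathbb{V})=\mathbb{V}\cap\mathbb{HPBZ}_{fin}^{\ast}\quad\text{and}\quad\Psi(H)=V(H).
\]
Order preservation of both maps is immediate, as is the fact that $\Psi(H)$ is a subvariety of $V(\mathbb{HPBZ}^{\ast})$. The first real verification is that $\Phi(\mathbb{V})$ actually lies in $\mathfrak{L}$: condition (a) is a direct consequence of varieties being closed under subalgebras, whereas condition (b) requires exhibiting, inside $\mathbf{L}\boxplus\mathbf{D}_{4}$, the congruence that identifies the two middle elements of the $\mathbf{D}_{4}$ summand; a routine check shows that the corresponding quotient is isomorphic to $\mathbf{L}\boxplus\mathbf{D}_{3}$ and that it still belongs to $\mathbb{HPBZ}_{fin}^{\ast}$ by Lemma \ref{lem:hsumsdi}.

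For the identity $\Psi\Phi=\mathrm{id}_{\mathfrak{H}}$, only the inclusion $\mathbb{V}\subseteq V(\Phi(\mathbb{V}))$ is non-trivial; here I would invoke the earlier theorem of this section to the effect that $V(\mathbb{HPBZ}^{\ast})$ is generated by its finite members and apply the same reasoning relativised to $\mathbb{V}$, obtaining $\mathbb{V}=V(\mathbb{V}\cap\mathbb{HPBZ}_{fin}^{\ast})$. Similarly, the easy inclusion in $\Phi\Psi=\mathrm{id}_{\mathfrak{L}}$ is $H\subseteq\Phi(\Psi(H))$, which is immediate since every member of $H$ is a member of $V(H)$ already in $\mathbb{HPBZ}_{fin}^{\ast}$.

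The main obstacle lies in the remaining inclusion $\Phi(\Psi(H))\subseteq H$. Let $\mathbf{A}\in V(H)\cap\mathbb{HPBZ}_{fin}^{\ast}$. Since $\mathbb{PBZL}^{\ast}$ has a lattice term reduct it is congruence distributive, so J\'onsson's lemma places $\mathbf{A}$ in $HSP_{U}(H)$; because $H$ consists of finite algebras and $\mathbf{A}$ is itself finite and subdirectly irreducible, the standard refinement of J\'onsson's lemma for finite subdirectly irreducible algebras tightens this to $\mathbf{A}\in HS(H)$. The concluding task is then to classify, using Lemmas \ref{scucca} and \ref{lem:hsumsdi}, which subalgebras and which subdirectly irreducible quotients of members of $H$ can themselves belong to $\mathbb{HPBZ}_{fin}^{\ast}$. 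Every subalgebra of a horizontal sum is again a horizontal sum of its own blocks --- with each block being either a Boolean subalgebra of a Boolean block or an antiortholattice subchain of the chain summand --- so the $S$-step is absorbed by condition (a); among the surviving quotients, Lemma \ref{lem:hsumsdi} implies that, apart from quotients already realised as subalgebras, the only genuinely new possibility is the collapse of a $\mathbf{D}_{4}$ summand onto $\mathbf{D}_{3}$ described at the outset, and this is precisely what condition (b) is tailored to absorb. Piecing these closures together forces $\mathbf{A}\in H$, which finishes the argument.
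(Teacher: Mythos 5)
Your strategy is the same as the paper's: the identical pair of maps, the same two composite identities to verify, and the same reliance on J\'onsson's Lemma together with Lemmas \ref{scucca} and \ref{lem:hsumsdi}; your explicit check that $\Phi(\mathbb{V})$ satisfies condition (b), via the congruence on $\mathbf{L}\boxplus\mathbf{D}_{4}$ that collapses the two middle elements of the chain summand, is correct and in fact more careful than the paper, which treats well-definedness as obvious. The first genuine gap is in your proof that $\mathbb{V}=V(\mathbb{V}\cap\mathbb{HPBZ}_{fin}^{\ast})$: you appeal to ``the earlier theorem of this section to the effect that $V(\mathbb{HPBZ}^{\ast})$ is generated by its finite members'', but no such earlier result exists --- that statement is Corollary \ref{cor:fin gen}, which is deduced \emph{from} Theorem \ref{thm:L<->H}, so as written your argument is circular. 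The missing content, supplied in the paper, is a local-finiteness argument: a subdirectly irreducible $\mathbf{L}\in\mathbb{V}$ is a horizontal sum of its Boolean blocks and an antiortholattice chain; Boolean algebras and chains are locally finite, so every finitely generated subalgebra of $\mathbf{L}$ is finite, hence Boolean or in $\mathbb{V}\cap\mathbb{HPBZ}_{fin}^{\ast}$; and $\mathbf{L}$, being the directed union of its finitely generated subalgebras, lies in $V(\mathbb{V}\cap\mathbb{HPBZ}_{fin}^{\ast})$. Some such argument must be made explicit.

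The second gap is the ``standard refinement of J\'onsson's lemma'' you use to pass from $\mathbf{A}\in HSP_{u}(H)$ to $\mathbf{A}\in HS(H)$. That refinement is standard when the generating class is a \emph{finite} set of finite algebras, so that ultraproducts collapse to members of the class; here $H$ is in general an infinite class of finite algebras of unbounded size, an ultraproduct of its members may be infinite, and the subalgebra of that ultraproduct of which $\mathbf{A}$ is a homomorphic image need not be finite even though $\mathbf{A}$ is. So the tightening does not come for free and you offer no justification for it. The paper avoids the issue by keeping the ultraproducts: the conditions of Lemma \ref{scucca} are universal first-order sentences, hence every member of $SP_{u}(H)$ is again a horizontal sum of its blocks, and then Lemma \ref{lem:hsumsdi}, the hypothesis $\mathbf{A}\in\mathbb{HPBZ}_{fin}^{\ast}$, and conditions (a) and (b) pin $\mathbf{A}$ down as a member of $H$. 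Your concluding classification of the relevant subalgebras and quotients (with the $\mathbf{D}_{4}\to\mathbf{D}_{3}$ collapse as the only genuinely new quotient, absorbed by condition (b)) is the right picture and is consistent with the congruence analysis inside the proof of Lemma \ref{lem:hsumsdi}, but it only becomes available once the passage from $HSP_{u}(H)$ to something tractable has been legitimately made.
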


\begin{proof}
Let $\phi:\mathfrak{L}\rightarrow\mathfrak{H}$ and $\psi:\mathfrak{H}%
\rightarrow\mathfrak{L}$ be defined by $\phi(H)=V(H)$ and $\psi(\mathbb{V}%
)=\mathbb{V}\cap\mathbb{HPBZ}_{fin}^{\ast}$, respectively. Clearly
$\phi,\,\psi$ are well-defined isotone mappings. We now show that $\psi
\circ\phi=\mathrm{id}_{\mathfrak{L}}$. It can be readily seen that
$H\subseteq\psi\circ\phi(H)=V(H)\cap\mathbb{HPBZ}_{fin}^{\ast}$. Let
$\mathbf{L}\in V(H)\cap\mathbb{HPBZ}_{fin}^{\ast}$, and suppose that
$\mathbf{L}\not \in H$. Since $\mathbf{L}$ is in $\mathbb{HPBZ}_{fin}^{\ast}$,
$\mathbf{L}$ is subdirectly irreducible, and because it is also in $V(H)$, by
J\'{o}nsson's Lemma $\mathbf{L}$ is in $HSP_{u}(H)$. Let us note that not
being a subalgebra of any $\mathbf{H}$ in $H$ is a first order property.
Furthermore, being a horizontal sum of one's blocks is again a first order
property, by Lemma \ref{scucca}. Then, every algebra in $SP_{u}(H)$ is a
horizontal sum of this sort. If $\mathbf{L}$ has one block, then $\mathbf{L}$
is either the two-element Boolean algebra, or by Lemma \ref{lem:hsumsdi} is
either $\mathbf{D}_{3}$, $\mathbf{D}_{4}$ or $\mathbf{D}_{5}$, and in both
cases it belongs to $H$. Otherwise, by Lemma \ref{lem:hsumsdi} again,
$\mathbf{L}$ is the horizontal sum of its Boolean blocks and {$\mathbf{D}_{j}%
$, for some }$3\leq j\leq5$. Again, $\mathbf{L}$ will be in $H$. Therefore,
$\psi\circ\phi(H)=H$.

As regards the converse, it is evident that if $\mathbb{V}\in\mathfrak{H}$,
then $\mathbb{V}\supseteq\phi\circ\psi(\mathbb{V})=HSP(\mathbb{V}%
\cap\mathbb{HPBZ}_{fin}^{\ast})$. For the converse inclusion, let $\mathbb{V}$
be an element of $\mathfrak{H}$, and $\mathbf{L}$ be subdirectly irreducible
in $\mathbb{V}$. Then, $\mathbf{L}$ is the horizontal sum of its Boolean
blocks and an antiortholattice chain. If $\mathbf{H}$ is a finitely generated
subalgebra of $\mathbf{L}$, then $\mathbf{H}$ is finite because it is the
horizontal sum of its Boolean blocks and an antiortholattice chain, which are
both locally finite. Then, either $\mathbf{H}$ is Boolean, or $\mathbf{H}$ is
in $\mathbb{V}\cap\mathbb{HPBZ}_{fin}^{\ast}$. In both cases, $\mathbf{H}$ is
contained in $HSP(\mathbb{V}\cap\mathbb{HPBZ}_{fin}^{\ast})$. This is the case
for any finitely generated subalgebra of $\mathbf{L}$. Therefore, $\mathbf{L}$
itself belongs to $HSP(\mathbb{V}\cap\mathbb{HPBZ}_{fin}^{\ast})$.
\end{proof}

A direct consequence of Theorem \ref{thm:L<->H} is the following:

\begin{corollary}
\label{cor:fin gen} Every subvariety of $V(\mathbb{HPBZ}^{\ast})$ is generated
by its finite members.
\end{corollary}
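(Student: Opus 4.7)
The plan is to read the corollary off directly from Theorem \ref{thm:L<->H}, whose content is that the maps $\phi$ and $\psi$ are mutually inverse lattice isomorphisms between $\mathfrak{L}$ and $\mathfrak{H}$. Given any subvariety $\mathbb{V}\in\mathfrak{H}$, the identity $\phi\circ\psi=\mathrm{id}_{\mathfrak{H}}$ (which is exactly the second half of the proof of Theorem \ref{thm:L<->H}) gives
\[
\mathbb{V}=\phi(\psi(\mathbb{V}))=V\bigl(\mathbb{V}\cap\mathbb{HPBZ}_{fin}^{\ast}\bigr).
\]
Since every member of $\mathbb{V}\cap\mathbb{HPBZ}_{fin}^{\ast}$ is by definition a finite algebra lying in $\mathbb{V}$, this exhibits $\mathbb{V}$ as generated by a class of its own finite members, which is precisely the claim.

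The only subtlety to flag is that the corollary is stated for \emph{all} subvarieties of $V(\mathbb{HPBZ}^{\ast})$, so I would briefly note that $\mathfrak{H}$ in the statement of Theorem \ref{thm:L<->H} really is the full subvariety lattice of $V(\mathbb{HPBZ}^{\ast})$, and that the generating set produced is genuinely contained in $\mathbb{V}$ (rather than merely in the ambient variety), which is automatic from the definition of $\psi$. There is no obstacle worth calling out: the real work was already done in Theorem \ref{thm:L<->H} (in particular in the appeal to J\'onsson's Lemma together with Lemmas \ref{scucca} and \ref{lem:hsumsdi} to control the subdirectly irreducibles of $\mathbb{V}$), and the corollary is simply the restatement of one half of the resulting bijection in slogan form.
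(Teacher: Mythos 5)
Your proof is correct and matches the paper's intent exactly: the paper gives no separate argument, presenting the corollary as a direct consequence of Theorem \ref{thm:L<->H}, and the identity $\mathbb{V}=\phi(\psi(\mathbb{V}))=V(\mathbb{V}\cap\mathbb{HPBZ}_{fin}^{\ast})$ is precisely the intended reading. Your added remarks about the generating set lying inside $\mathbb{V}$ are accurate and harmless.
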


\emph{Acknowledgement}. R. Giuntini gratefully acknowledges the support of the
Regione Autonoma Sardegna within the project "Modeling the uncertainty:
quantum theory and imaging processing", LR 7/8/2007, RAS CRP-59872. All
authors gratefully acknowledge the support of the Horizon 2020 program of the
European Commission: SYSMICS project, Proposal Number: 689176, MSCA-RISE-2015,
and the suggestions of two referees.

\end{document}